\documentclass[11pt]{amsart}
\usepackage[T1]{fontenc}
\usepackage{amssymb, stmaryrd}
\usepackage{url}
\usepackage{bbm}
\usepackage[dvipsnames]{xcolor}
\usepackage{mathtools}

\usepackage{tikz}
\usetikzlibrary{arrows}
\usetikzlibrary{cd}
\tikzset{>=latex}

\usepackage{hyperref}
\usepackage{amsrefs}

% Category Theory macros
\newcommand{\cat}{\mathcal{C}}

\DeclareMathOperator{\Hom}{Hom}
\DeclareMathOperator{\Aut}{Aut}
\newcommand{\catname}[1]{\mathbf{#1}}
\newcommand{\Span}{\catname{Span}}

\newcommand{\set}{\catname{Set}}
\newcommand{\cob}{\catname{Cob}}
\newcommand{\PMon}{\catname{PMon}}
\newcommand{\cut}{\mathsf{Cut}}
\newcommand{\id}{\mathrm{id}}

\newcommand{\op}{\mathrm{op}}

% Miscellaneous macros
\renewcommand{\emptyset}{\varnothing}
\newcommand{\suchthat}{\;\middle|\;}
\newcommand{\Z}{\mathbb{Z}}

\newcommand{\kk}{\Bbbk}
\newcommand{\bitimes}[2]{\,\vphantom{\times}_{#1}\!\! \times_{#2}\!}
\DeclareMathOperator{\im}{im}

% Theorem Environments
\newtheorem{thm}{Theorem}[section]
\newtheorem{prop}[thm]{Proposition}
\newtheorem{lemma}[thm]{Lemma}
\newtheorem{cor}[thm]{Corollary}

\theoremstyle{definition}
\newtheorem{definition}[thm]{Definition}

\newtheorem{remark}[thm]{Remark}

\newtheorem{example}[thm]{Example}

\newtheorem*{ack}{Acknowledgements}
\numberwithin{equation}{section}

\begin{document}

\title{Coherent span-valued 2D TQFTs}
\author{Sophia E Marx}
\address{Department of Mathematics\\University of Massachusetts\\Amherst, MA}
\email{semarx@umass.edu}
\author{Rajan Amit Mehta}
\address{Department of Mathematical Sciences\\Smith College\\44 College Lane\\Northampton, MA 01063}
\email{rmehta@smith.edu}

\subjclass[2020]{
18B10, %Categories of spans/cospans, relations, or partial maps
18B40, %Groupoids, semigroupoids, semigroups, groups (viewed as categories)
18C40, %Structured objects in a category (group objects, etc.)
18N50, %Simplicial sets, simplicial objects
57R56%Topological quantum field theories (aspects of differential topology)
} 
\keywords{$2$-Segal set, span, Frobenius algebra, simplicial set, topological quantum field theory}

\begin{abstract}
We consider commutative Frobenius pseudomonoids in the bicategory of spans, and we show that they are in correspondence with $2$-Segal cosymmetric sets. Such a structure can be interpreted as a coherent $2$-dimensional topological quantum field theory taking values in the bicategory of spans. We also describe a construction that produces a $2$-Segal cosymmetric set from any partial monoid equipped with a distinguished element.
\end{abstract}

\maketitle

\section{Introduction}

A topological quantum field theory (TQFT) is defined to be a symmetric monoidal functor from a cobordism category to the category of vector spaces. More generally, one can consider $\cat$-valued TQFTs, where $\cat$ is any symmetric monoidal category. 

One of the first results that children learn about TQFTs is that commutative Frobenius algebras are in correspondence with $2$-dimensional oriented TQFTs \cites{abrams, dijkgraaf:thesis}. This result is really a theorem about the structure of the $2$-dimensional oriented cobordism category, so it immediately generalizes to a correspondence between commutative Frobenius objects in $\cat$ and $\cat$-valued $2$-dimensional oriented TQFTs.

We are particularly interested in TQFTs valued in categories of spans. Such structures naturally arise from sigma models in mathematical physics, due to the fact that fields can be restricted from a cobordism to its boundary components (see, for example, \cites{CHS,freed:remarks, haugseng_iterated_2018}). Another motivation is that span categories can be viewed as objectifications of the category of vector spaces (see, for example, \cites{BaezGroupoid,GKT_HLA}).

In order to properly define a category of spans, one needs to use isomorphism classes of spans as the morphisms. A desire to work at the level of spans themselves leads us to consider higher categories. When studying algebraic structures in span categories, one might then look for lifts to coherent structures in the higher categories. This is partly motivated by the aesthetic argument that any ``natural'' example should arise from such a coherent structure. We will work in the bicategory of spans of sets, which we denote as $\Span$, but we believe that analogous results hold in more general settings. 

In $\Span$, the coherent structure that lifts a Frobenius object is known as a \emph{Frobenius pseudomonoid} \cite{Street:Frobenius}. Following the above discussion, we view a commutative Frobenius pseudomonoid in $\Span$ as a coherent $\Span$-valued $2$-dimensional oriented TQFT. The purpose of this paper is to give an equivalent characterization of such objects in terms of simplicial sets with additional structure.

As a special case of a result of Stern \cite{Stern:span}, it is known that pseudomonoids in $\Span$ correspond to simplicial sets satisfying the \emph{$2$-Segal conditions} of \cites{Dyckerhoff-Kapranov:Higher, GKT1}; see \cites{MM:2segal,stern:perspectives} for recent expositions of this correspondence. In \cite{CMS}, commutative and Frobenius structures on pseudomonoids in $\Span$ were separately considered and found to correspond to additional structures on the corresponding simplicial sets. Specifically, it was shown that commutative pseudomonoids in $\Span$ correspond to $2$-Segal $\Gamma$-sets, and that Frobenius pseudomonoids in $\Span$ correspond to $2$-Segal paracyclic sets. 

The main result of this paper is to show that when $\Gamma$ and paracyclic structures are both present on the same underlying $2$-Segal set, then they fit together (with no further compatibility conditions) to form a unique \emph{cosymmetric} structure. In light of the results of \cite{CMS}, we conclude that commutative Frobenius pseudomonoids in $\Span$ correspond to $2$-Segal cosymmetric sets.

Additionally, we describe a construction that can produce $2$-Segal cosymmetric sets from fairly common data, such as a commutative partial monoid with an arbitrary distinguished element. Using this construction, we obtain numerous examples, including some that to our knowledge have not previously appeared in the $2$-Segal literature. As an application, we construct a cosymmetric structure on the nerve of an effect algebra, slightly refining a result of Roumen \cite{roumen:effectalgebras}, who showed that the nerve of an effect algebra has a cyclic structure.

We end the introduction with an additional TQFT-related motivation that suggests further directions. Implicit in the work of Crane and Yetter \cite{CraneYetter} is the fact that in the $321$-dimensional extended cobordism bicategory, the circle $S^1$ has the structure of a commutative Frobenius pseudomonoid; this structure can be seen more explicitly in the generator-and-relation description of \cite{BDSV}. Thus, a commutative Frobenius pseudomonoid in $\Span$ would form part of the data of a $\Span$-valued $321$-dimensional extended TQFT. Using the additional generators and relations in \cite{BDSV}, one might hope to build on our work and obtain a concise description of modular tensor objects in $\Span$.

\begin{ack}
We would like to thank Ivan Contreras and Walker Stern for many helpful discussions on topics related to this paper.
\end{ack}

\section{Structured simplicial sets}
\label{sec:structured}
The main results of this paper deal with simplicial sets that have certain additional structures and the relationships between those structures. These structures and the relationships between them can be summarized by the following commutative diagram\footnote{This diagram is known to people working in cyclic theory; for example, it appears (without $\Lambda_\infty$) in \cite{Loday}*{Section 6.4.2}.} of categories:

\begin{equation}\label{eqn:categories}
\begin{tikzcd}
    \Delta \arrow[r] \arrow[d] & \Lambda_\infty \arrow[r] & \Lambda \arrow[d] \\
    \Phi_*^\op \arrow[rr] && \Phi^\op
\end{tikzcd}
\end{equation}
Here, $\Delta$ is the simplex category, $\Lambda$ and $\Lambda_\infty$ are the cyclic and paracyclic categories, and $\Phi$ and $\Phi_*$ are skeletons of the categories of nonempty finite sets and finite pointed sets, respectively. In this section we will describe the categories and functors in \eqref{eqn:categories}, as well as the induced relationships between presheaves on these categories, in more detail.

\subsection{The simplex category}
To establish notation and terminology, we briefly review the simplex category and simplicial sets. This material is standard and can be found in, e.g., \cite{goerss-jardine}.

The \emph{simplex category} $\Delta$ is defined as follows. The objects are the sets $[n] = \{0,1,\dots, n\}$ for $n = 0,1, \dots$, and the morphisms are weakly order-preserving maps.

The morphisms in $\Delta$ are generated by \emph{coface maps} $\delta_i^n: [n-1] \to [n]$ for $0 \leq i \leq n$, given by
\[ \delta_i^n(k) = \begin{cases}
    k, & k < i,\\
    k+1 & k \geq i,
\end{cases}
\]
and \emph{codegeneracy maps} $\sigma_i^n: [n+1] \to [n]$ for $0 \leq i \leq n$, given by
\[ \sigma_i^n(k) = \begin{cases}
    k, & k \leq i, \\
    k-1, & k > i,
\end{cases}
\]
satisfying the \emph{cosimplicial relations}
\begin{align*}
    \delta_j^{n} \delta_i^{n-1} &= \delta_i^{n} \delta_{j-1}^{n-1},  \hspace{28px} i < j, \\
    \sigma_j^{n} \delta_i^{n-1} &= \begin{cases}
        \delta_i^{n+1} \sigma_{j-1}^n, & i < j,\\
        \id, &  i=j, j+1,\\
        \delta_{i-1}^{n+1} \sigma_j^n, &  i > j+1,
    \end{cases}\\
    \sigma_j^{n} \sigma_i^{n+1} &= \sigma_i^{n} \sigma_{j+1}^{n+1}, \hspace{26px} i \leq j.
\end{align*}

A \emph{simplicial set} is a presheaf on $\Delta$, i.e.\ a functor $X_\bullet: \Delta^\op \to \set$. From the generator-and-relation description of $\Delta$, it follows that a simplicial set
can be described by the following data:
\begin{itemize}
    \item sets $X_n$ for $n=0,1,\dots$,
    \item \emph{face maps} $d_i^n: X_n \to X_{n-1}$ for $0 \leq i \leq n$, and
    \item \emph{degeneracy maps} $s_i^n: X_n \to X_{n+1}$ for $0 \leq i \leq n$,
\end{itemize}
satisfying the \emph{simplicial relations}
\begin{align*}
    d_i^{n-1}d_j^n &= d_{j-1}^{n-1} d_i^n, \hspace{28px} i < j, \\
    d_i^{n-1} s_j^n &= \begin{cases}
    s_{j-1}^n d_i^{n+1}, & i < j,\\
    \id, & i=j,j+1,\\
    s_j^n d_{i-1}^{n+1}, & i > j+1,
    \end{cases}\\
    s_i^{n+1} s_j^n &= s_{j+1}^{n+1} s_i^n, \hspace{28px} i \leq j.
\end{align*}

Let $\cat$ be a category with the same objects as the simplex category, equipped with a functor $\Delta \to \cat$. Then we can view a presheaf on $\cat$, i.e\ a functor $\cat^\op \to \set$, as a \emph{structured simplicial set}, since (at least in principle) such a presheaf can be described as a simplicial set with additional structure maps and relations. All of the categories in \eqref{eqn:categories} are of this type, and in the remainder of this section we will describe each of them, giving generator-and-relation descriptions that explicitly allow us to realize their presheaves as simplicial sets carrying various types of additional structure.

\subsection{The paracyclic category}
In this section, we review the definition and basic properties of the paracyclic category and paracyclic sets. For more details as well as the original motivation for paracyclic objects, which comes from cyclic homology, see \cites{fiedorowicz-loday, getzler-jones}.

The \emph{paracyclic category} $\Lambda_\infty$ has the same objects as the simplex category, and a morphism $f \in \Hom_{\Lambda_\infty}([m],[n])$ is a weakly order-preserving map $f: \Z \to \Z$ such that 
\begin{equation}\label{eqn:equivariance}
    f(x+m+1) = f(x)+n+1.
\end{equation}
From \eqref{eqn:equivariance}, it follows that any map $f: \{0,\dots,m\} \to \Z$ that is weakly order-preserving and such that $f(m) \leq f(0)+n+1$ uniquely extends to a morphism $f \in \Hom_{\Lambda_\infty}([m],[n])$. In particular, there is a natural inclusion $\Delta \hookrightarrow \Lambda_\infty$, with the image consisting of morphisms $f$ such that $f(\{0,\dots,m\}) \subseteq \{0,\dots,n\}$.

For each $n$, we define the \emph{translation map} $T^n \in \Aut_{\Lambda_\infty}([n])$ by $T^n(i) = i+1$ for $i \in \Z$. Every $f \in \Hom_{\Lambda_\infty}([m],[n])$ can be uniquely factored as $f = g \circ (T^m)^k$, where $g \in \Hom_\Delta([m],[n])$ and $k \in \Z$ (see \cite{CMS}*{Proposition 4.1}), so it follows that the maps $T^n$, together with the coface and codegeneracy maps, generate $\Lambda_\infty$.  It also follows that
\[ \Aut_{\Lambda_\infty}([n]) \cong \Z,\]
with $T^n$ as a generator.

The translation maps satisfy the following relations which, together with the cosimplicial relations, give us a generator-and-relation description of $\Lambda_\infty$.
\begin{align} \label{eqn:paracycliccoface}
    T^n \delta_i^n &= \begin{cases}
        \delta_{i+1}^n T^{n-1}, & 0 \leq i < n, \\
        \delta_0^n, & i=n,
    \end{cases}\\ \label{eqn:paracycliccodegen}
    T^n \sigma_i^n &= \begin{cases}
        \sigma_{i+1}^n T^{n+1}, & 0 \leq i < n, \\
        \sigma_0^n (T^{n+1})^2, & i=n.
    \end{cases}
\end{align}

Another special family of morphisms in $\Lambda_\infty$ are the \emph{extra codegeneracy maps} $\sigma_{n+1}^n = \sigma_0^n T^{n+1} \in \Hom_{\Lambda_\infty}([n+1],[n])$. They satisfy relations
\begin{align} 
\label{eqn:paracycliclift1}
    \sigma_{n+1}^n \delta_i^{n+1} &= \begin{cases}
        T^n, & i=0,\\
        \delta_i^n \sigma_n^{n-1}, & 0<i<n+1,\\
        \id, & i=n+1,
    \end{cases}\\ \label{eqn:paracycliclift2}
    \sigma_{n+1}^n \sigma_i^{n+1} &=
        \sigma_i^n \sigma_{n+2}^{n+1}, \;\;\;\;\;\; 0 \leq i \leq n+1.
\end{align}

A \emph{paracyclic set} is a presheaf on $\Lambda_{\infty}$, i.e.\ a functor $\Lambda_{\infty}^\op \to \set$. From the above generator-and-relation descriptions of $\Lambda_\infty$, we see that a paracyclic set is equivalent to a simplicial set $X_\bullet$ equipped with invertible maps $\tau^n: X_n \to X_n$ satisfying relations dual to \eqref{eqn:paracycliccoface}--\eqref{eqn:paracycliccodegen}:
\begin{align}\label{eqn:paracyclicface}
    d_i^n \tau^n &= \begin{cases}
        \tau^{n-1} d_{i+1}^n, & i<n,\\
        d_0^n, & i=n,
    \end{cases}\\ \label{eqn:paracyclicdegen}
    s_i^n \tau^n &= \begin{cases}
        \tau^{n+1} s_{i+1}^n, & i<n,\\
        (\tau^{n+1})^2 s_0^n, & i=n.
    \end{cases}
\end{align}
It is often useful to think of $\tau^n$ as generating a $\Z$-action on $X_n$ for each $n$.

A paracyclic set $X_\bullet$ possesses \emph{extra degeneracy maps} $s_{n+1}^n = \tau^{n+1} s_0^n: X_n \to X_{n+1}$, which satisfy relations dual to \eqref{eqn:paracycliclift1}--\eqref{eqn:paracycliclift2}:
\begin{align}\label{eqn:paracyclicgamma1}
    d_i^{n+1} s_{n+1}^n &= \begin{cases}
        \tau^n, & i=0,\\
        s_n^{n-1} d_i^n, & 0 < i < n+1, \\
        \id, & i = n+1,
    \end{cases}\\ \label{eqn:paracyclicgamma2}
    s_i^{n+1} s_{n+1}^n &= s_{n+2}^{n+1} s_i^n, \;\;\;\;\;\; 0 \leq i \leq n+1.
\end{align}

\subsection{The cyclic category}

The \emph{cyclic category} $\Lambda$, introduced by Connes \cite{connes:cyclic}, is the quotient of $\Lambda_\infty$ by the additional relations $(T^n)^{n+1} = \id$ for all $n$.

A \emph{cyclic set} is a presheaf on $\Lambda$, i.e.\ a functor $\Lambda^\op \to \set$. Thus, a cyclic set is equivalent to a paracyclic set such that $(\tau^n)^{n+1} = \id$ for all $n$. This relation implies that the $\Z$-action on $X_n$ descends to an action of $\Z/(n+1)\Z$ for each $n$.

\subsection{The category of finite pointed cardinals}
\label{sec:phistar}

We use $\Phi_*$ to denote the category of finite pointed cardinals. It has the same objects as the simplex category, and the morphisms $f \in \Hom_{\Phi_*}([m],[n])$ are maps $f:[m] \to [n]$ such that $f(0) = 0$. Because $\Phi_*$ is a skeleton of the category of finite pointed sets, it is often referred to in the literature by names such as $\catname{Fin}_*$ or $\catname{FinSet}_*$.

The functor $\Delta \to \Phi_*^\op$ in \eqref{eqn:categories} is more easily defined via its opposite functor, known as $\cut: \Delta^\op \to \Phi_*$. It is given by face maps $d_i^n: [n] \to [n-1]$, 
\[ d_i^n(k) = \begin{cases}
    k, & k \leq i, \\
    k-1, & k > i,
\end{cases}\]
for $0 \leq i < n$ and
\[ d_n^n(k) = \begin{cases}
    k, & k < n, \\
    0, & k =n,
\end{cases}\]
and degeneracy maps $s_i^n: [n] \to [n+1]$,
\[ s_i^n(k) = \begin{cases}
    k, & k \leq i, \\
    k+1, & k > i.
\end{cases}\]
Informally, $s_i^n$ is the map that skips $i+1$, $d_i^n$ is the map that collapses $i$ and $i+1$ together for $i < n$, and $d_n^n$ collapses $0$ and $n$ together.

In addition to the morphisms in the image of $\cut$, $\Phi_*$ contains transposition maps $\theta_i^n: [n] \to [n]$ for $1 \leq i \leq  n-1$ that swap $i$ and $i+1$, keeping all other elements fixed. They satisfy the \emph{Moore relations}
\begin{align}
    (\theta_i)^2 &= \id, \label{eqn:moore1} \\
    \theta_i \theta_j \theta_i &= \theta_j \theta_i \theta_j, \;\; i = j-1, \label{eqn:moore2} \\
    \theta_i \theta_j &= \theta_j \theta_i, \;\; i < j-1, \label{eqn:moore3}
\end{align}
as well as the \emph{mixed relations}
\begin{align}\label{eqn:mixed1b}
    \theta_i s_j &= \begin{cases}
        s_j \theta_i, & i < j, \\
        s_{i-1}, & i = j, \\
        s_{i}, & i = j+1 \\        
        s_j \theta_{i-1}, & i>j+1,
    \end{cases} \\
    \theta_i d_j &= \begin{cases}
        d_j \theta_i, & i < j-1,\\
        d_i \theta_{i+1} \theta_i, & i = j-1,\\
        d_{i+1}\theta_i \theta_{i+1}, & i=j,\\
        d_j \theta_{i+1}, & i>j,
    \end{cases} \label{eqn:thetad} \\
    d_i \theta_i &= d_i,\label{eqn:mixed3b} \\    \label{eqn:dn}
    d_n^n &= d_0^n \theta_1^n \cdots \theta_{n-1}^n.
\end{align}
The maps $s_i^n$, $d_i^n$, and $\theta_i^n$ generate $\Phi_*$, and the simplicial relations, the Moore relations, and the mixed relations are sufficient to generate all relations (see \cite{CMS}*{Section 5.4}).

A \emph{$\Gamma$-set} is a functor $\Phi_* \to \set$. This terminology arises from the fact that $\Phi_*^\op$ is isomorphic to the category $\Gamma$ introduced by Segal \cite{SegalCoh}, so a functor $\Phi_* \to \set$ can be viewed as a presheaf on $\Gamma$.

From the generator-and-relation description of $\Phi_*$, we have that a $\Gamma$-set is a simplicial set $X_\bullet$ equipped with maps $\theta_i^n: X_n \to X_n$, $1 \leq i \leq n-1$, satisfying \eqref{eqn:moore1}--\eqref{eqn:dn}. Equivalently, a $\Gamma$-set is a simplicial set $X_\bullet$ equipped with an action of the symmetric group $S_n \cong \Aut_{\Phi_*}([n])$ on $X_n$ for each $n$, such that the transpositions $\theta_i^n$ satisfy \eqref{eqn:mixed1b}--\eqref{eqn:dn}.

\subsection{The category of finite cardinals}
\label{sec:phi}

We use $\Phi$ to denote the category of nonempty finite cardinals, which is a skeleton of the category of nonempty finite sets. It is the full subcategory of $\set$ consisting of objects $[n] = \{0,\dots, n\}$ for $n \geq 0$. 

There is a natural inclusion $\Phi_* \hookrightarrow \Phi$, which can be interpreted as the skeletal version of the basepoint-forgetting functor from the category of finite pointed sets to the category of nonempty finite sets.

The image of $\Phi_*$ in $\Phi$ consists of morphisms that fix $0$. The cyclic permutations $\tau^n: [n] \to [n]$, given by
\[ \tau^n(k) = \begin{cases}
    n & k=0,\\
    k-1 & k \geq 1,
\end{cases}\]
form an important family of maps that are not in the image of $\Phi_*$.
They satisfy the relations
\begin{align}
    (\tau^n)^{n+1} &= \id, \label{eqn:cyclicperm}\\
    \theta_i^n \tau^n &= \begin{cases}
        \tau^n \theta_{i+1}^n, & 1 \leq i < n-1, \\
        (\tau^n)^2 \theta_1^n \cdots \theta_{n-1}^n, & i = n-1,
    \end{cases} \label{eqn:tautheta}
\end{align}
as well as the relations \eqref{eqn:paracyclicface}--\eqref{eqn:paracyclicdegen}. We note that \eqref{eqn:moore1}--\eqref{eqn:moore3} and \eqref{eqn:cyclicperm}--\eqref{eqn:tautheta} are exactly the relations for the symmetric group $S_{n+1} \cong \Aut_{\Phi}([n])$.

Any map $f \in \Hom_\Phi([m],[n])$ can be uniquely factored as $f = (\tau^n)^k \circ g$, where $g \in \Hom_{\Phi_*}([m],[n])$ and $k \in \Z_{n+1}$, so it follows that the above relations, together with the relations in $\Phi_*$, are sufficient to characterize $\Phi$.

Because the relations \eqref{eqn:paracyclicface}--\eqref{eqn:paracyclicdegen} and \eqref{eqn:cyclicperm} that characterize a cyclic structure appear in $\Phi$, we obtain a functor $\Lambda^\op \to \Phi$ whose opposite is, by definition, the functor $\Lambda \to \Phi^\op$ in \eqref{eqn:categories}.

We use the term \emph{cosymmetric set} to describe a functor $\Phi \to \set$. This terminology arises from the fact that functors $\Phi^\op \to \set$ are commonly called \emph{symmetric sets}.

From the generator-and-relation description of $\Phi$, we have that a cosymmetric set is a simplicial set $X_\bullet$ equipped with maps $\theta_i^n: X_n \to X_n$, $1 \leq i \leq n-1$, and $\tau^n: X_n \to X_n$, satisfying \eqref{eqn:paracyclicface}--\eqref{eqn:paracyclicdegen}, \eqref{eqn:moore1}--\eqref{eqn:dn}, and \eqref{eqn:cyclicperm}--\eqref{eqn:tautheta}. Equivalently, a cosymmetric set is a simplicial set $X_\bullet$ equipped with an action of the symmetric group $S_{n+1}$ on $X_n$ for each $n$, such that the transpositions $\theta_i^n$ and cyclic permutations $\tau^n$ satisfy \eqref{eqn:paracyclicface}--\eqref{eqn:paracyclicdegen} and \eqref{eqn:mixed1b}--\eqref{eqn:dn}.

\begin{remark}
    We warn that there are other commonly-studied structures that involve $S_{n+1}$ actions on simplicial sets, but where the compatibility conditions with the face and degeneracy maps are different than those of a cosymmetric set. For example, the nerve of a groupoid naturally admits $S_{n+1}$ actions giving it the structure of a symmetric set, i.e.\ a functor $\Phi^\op \to \set$ (see \cite{Grandis}). There is also the notion of a $\Delta\mathfrak{S}$-set, i.e.\ a presheaf on the symmetric crossed simplicial group $\Delta \mathfrak{S}$ (see, e.g., \cite{fiedorowicz-loday}*{Example 6}). As a specific point of comparison, cosymmetric sets satisfy the identity $d_0^2 \theta_1^2 = d_2^2$, whereas symmetric sets satisfy $d_0^2 \theta_1^2 = \tau^1 d_2^2$ and $\Delta\mathfrak{S}$-sets satisfy $d_0^2 \theta_1^2 = \tau^1 d_0^2$. 
\end{remark}

\section{\texorpdfstring{$2$-Segal sets}{2-Segal sets}}

The notion of \emph{$2$-Segal space} was introduced independently by Dyckerhoff and Kapranov \cite{Dyckerhoff-Kapranov:Higher} and by Galv\`ez-Carillo, Kock, and Tonks \cite{GKT1}, the latter using the name \emph{decomposition space}. In this section, we briefly review the definition (in the discrete setting, i.e.\ that of a \emph{$2$-Segal set}) as well as the motivation for considering $2$-Segal sets in the context of this paper.  

In addition to the original works cited above, there are now several good introductory papers \cites{bergner2024combinatorial, BOORS, MM:2segal, stern:perspectives} that focus on the case of $2$-Segal sets.

\subsection{Definition of \texorpdfstring{$2$}{2}-Segal sets}

Let $X_\bullet$ be a simplicial set. Then, for $0 < i < n$, the following diagrams commute as a consequence of the simplicial relations:
\begin{equation}\label{diag:2Segal}
\begin{tikzcd}
X_{n+1} \arrow[d, "d_{i+1}"'] \arrow[r, "d_0"] & X_n \arrow[d, "d_i"] \\
X_n  \arrow[r, "d_0"]                                 & X_{n-1}
\end{tikzcd}
\hspace{3em}
\begin{tikzcd}
X_{n+1} \arrow[d, "d_i"'] \arrow[r, "d_{n+1}"] & X_n \arrow[d, "d_i"] \\
X_n  \arrow[r, "d_n"]                                 & X_{n-1}
\end{tikzcd}
\end{equation}

\begin{definition}\label{def:2Segal}
The simplicial set $X_\bullet$ is \emph{$2$-Segal} if the diagrams \eqref{diag:2Segal} are pullbacks for all $0<i<n$.
\end{definition}

\begin{remark}\label{remark:unitality}
The conditions in Definition \ref{def:2Segal} are taken from \cite{GKT1}*{Section 3.4}. There are extra conditions there, corresponding to what is called the \emph{unitality} property in \cite{Dyckerhoff-Kapranov:Higher}, but it was shown in \cite{FGKW:unital} that unitality holds automatically as a consequence of the conditions in Definition \eqref{def:2Segal}. There are several other equivalent forms of the definition, e.g.\ \cite{Dyckerhoff-Kapranov:Higher}*{Proposition 2.3.2} and \cite{BOORS2}*{Proposition 1.17}.
\end{remark}

The nerve of a category is $2$-Segal. In Section \ref{sec:partialmonoid}, we'll describe in some detail an example that is not necessarily the nerve of a category. More examples are in Sections \ref{sec:effect} and \ref{sec:noteffect}. The reader will find many other examples in the references mentioned above.

We will later make use of the following property, which combines the conditions from the definition with results from \cite{GKT1}*{Lemma 3.10}.
\begin{lemma}\label{lemma:nnpullback}
If $X_\bullet$ is $2$-Segal, then for $0 \leq i < j < n$ or $0 < i < j \leq n$, the diagram
\[
\begin{tikzcd}
X_{n+1} \arrow[r, "d_i"] \arrow[d, "d_{j+1}"'] & X_n \arrow[d, "d_j"] \\
X_n \arrow[r, "d_i"]                                  & X_{n-1}
\end{tikzcd}
\]
is a pullback.
\end{lemma}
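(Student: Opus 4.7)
The plan is to prove the lemma by induction, using the pasting lemma for pullback squares to reduce the general case to the base cases provided directly by Definition \ref{def:2Segal}.

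The base cases are as follows. When $i=0$ and $0<j<n$, the lemma's square is, after relabeling the dummy index $i$ in \eqref{diag:2Segal} as $j$, precisely the first diagram in \eqref{diag:2Segal}. When $0<i<n$ and $j=n$, the lemma's square is the transpose of the second diagram in \eqref{diag:2Segal}. In both cases the square is a pullback by the $2$-Segal hypothesis, using that the transpose of a pullback square is a pullback.

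For the remaining indices $0<i<j<n$, I would induct on $i$. The key move is a pasting argument: paste the lemma's square horizontally on the right with the base-case pullback at parameters $(0,j-1,n-1)$, forming the rectangle
\[
\begin{tikzcd}
X_{n+1} \arrow[r, "d_i"] \arrow[d, "d_{j+1}"'] & X_n \arrow[d, "d_j"'] \arrow[r, "d_0"] & X_{n-1} \arrow[d, "d_{j-1}"] \\
X_n \arrow[r, "d_i"] & X_{n-1} \arrow[r, "d_0"] & X_{n-2}
\end{tikzcd}
\]
Using the simplicial identity $d_0 d_i = d_{i-1} d_0$, the same composite rectangle decomposes differently as
\[
\begin{tikzcd}
X_{n+1} \arrow[r, "d_0"] \arrow[d, "d_{j+1}"'] & X_n \arrow[d, "d_j"'] \arrow[r, "d_{i-1}"] & X_{n-1} \arrow[d, "d_{j-1}"] \\
X_n \arrow[r, "d_0"] & X_{n-1} \arrow[r, "d_{i-1}"] & X_{n-2}
\end{tikzcd}
\]
whose left square is the base case with parameters $(0,j,n)$ and whose right square is the smaller instance $(i-1,j-1,n-1)$, a pullback by the inductive hypothesis. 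So the outer rectangle is a pullback, and since the rightmost square of the first decomposition is itself a pullback (by the base case), the pasting lemma then gives that the leftmost square of the first decomposition --- which is the lemma's square --- is a pullback.

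The main obstacle I anticipate is just the indexing bookkeeping: one must verify that in the reduction $(i,j,n) \mapsto (i-1,j-1,n-1)$ the indices stay within the range where either a base case or the inductive hypothesis applies (in particular that $0<j-1<n-1$ and $0\leq i-1<j-1<n-1$), and that the two decompositions of the rectangle really do share the same horizontal composites. Both verifications are short applications of the simplicial identities, but they require some care to write out correctly.
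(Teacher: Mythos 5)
Your proof is correct; I checked the index bookkeeping and it holds up. The base cases are exactly right: the first diagram of \eqref{diag:2Segal} is the case $i=0$, $0<j<n$ after relabeling, and the transpose of the second diagram is the case $j=n$, $0<i<n$, which together cover everything outside the interior range $0<i<j<n$. In the inductive step, the identity $d_0 d_i = d_{i-1} d_0$ (valid since $i\geq 1$) does make the two decompositions of the rectangle share the same horizontal composites, the middle vertical $d_j$ is consistent in both (via $d_0 d_{j+1} = d_j d_0$ and $d_{j-1} d_0 = d_0 d_j$), and the ranges work out: $i\geq 1$ and $i<j$ force $j\geq 2$, so $(0,j-1,n-1)$ and $(0,j,n)$ are legitimate base cases, and $(i-1,j-1,n-1)$ satisfies $0\leq i-1<j-1<n-1$, so the inductive hypothesis (with the statement quantified over all $j$ and $n$ for fixed first index) applies. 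The cancellation step --- outer rectangle and right square pullbacks imply left square pullback --- is the same ``Prism Lemma'' the paper invokes in the proof of Lemma \ref{lemma:paracyclicpullback}. Note that the paper itself gives no proof of this lemma, instead deferring to \cite{GKT1}*{Lemma 3.10} combined with Definition \ref{def:2Segal}; your induction-with-pasting argument is essentially the standard argument underlying that citation, so what your write-up buys is a self-contained verification rather than a genuinely different method.
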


\subsection{Motivation: \texorpdfstring{$2$}{2}-Segal sets are pseudomonoids in \texorpdfstring{$\Span$}{Span}}

Our motivation for considering $2$-Segal sets is a result of Stern \cite{Stern:span}, which implies that $2$-Segal sets are in correspondence with pseudomonoids in $\Span$. We very briefly review this correspondence here, and we refer the reader to \cites{CMS,MM:2segal,stern:perspectives} for more details. For the symmetric monoidal bicategory $\Span$, we refer to \cite{stay}*{Section 5} and \cite{Johnson-Yau}*{Example 2.1.22} for thorough treatments and to \cite{CMS}*{Section 1} and \cite{MM:2segal}*{Section 4} for brief expository accounts.

Suppose $X_\bullet$ is a simplicial set. Then we can form the spans
\begin{align}
\label{eqn:multandunit}
X_1 \times X_1 \xleftarrow{(d_2,d_0)} &X_2 \xrightarrow{d_1} X_1, & \{\mathrm{pt}\} \xleftarrow{} &X_0 \xrightarrow{s_0} X_1,
\end{align}
which can be viewed as multiplication and unit morphisms in $\Span$. From the diagrams \eqref{diag:2Segal} with $n=2$, we have maps
\begin{equation} \label{eqn:associator}
X_2 \bitimes{d_2}{d_1} X_2 \xleftarrow{(d_1,d_3)} X_3 \xrightarrow{(d_0,d_2)} X_2 \bitimes{d_1}{d_0} X_2.
\end{equation}
If $X_\bullet$ is $2$-Segal, then these maps are isomorphisms, and together they provide an associator for the multiplication. Additionally, we have maps
\begin{align}\label{eqn:unitors}
    X_1 &\xrightarrow{(d_1,s_0)} X_0 \bitimes{s_0}{d_2}X_2, & X_1 &\xrightarrow{(s_1,d_0)} X_2 \bitimes{d_0}{s_0}X_0.
\end{align}
If $X_\bullet$ is $2$-Segal, then these maps are isomorphisms as a result of the unitality property (see Remark \ref{remark:unitality}), and they provide left and right unitors. The pentagon equation can be deduced from \eqref{diag:2Segal} with $n=3$, and the triangle equation also follows from unitality.

The process we have just described produces an up-to-isomorphism correspondence between $2$-Segal sets and pseudomonoids in $\Span$. This correspondence can be upgraded to an equivalence of categories; see \cite{stern:perspectives}.

\subsection{More motivation: structured \texorpdfstring{$2$}{2}-Segal sets}

Suppose that $\cat$ is a category with the same objects as the simplex category, equipped with a functor $\Delta \to \cat$ that is the identity map on objects. We define a \emph{$2$-Segal $\cat$-set} to be a functor $\cat^\op \to \set$ such that the induced simplicial set is $2$-Segal. In particular, using the categories in \eqref{eqn:categories}, we can define $2$-Segal paracyclic sets, $2$-Segal cyclic sets, $2$-Segal $\Gamma$-sets, and $2$-Segal cosymmetric sets.

Using the correspondence between $2$-Segal sets and pseudomonoids in $\Span$, one could, informally, view a $2$-Segal $\cat$-set as a pseudomonoid in $\Span$ with additional structures and/or properties. Some precise statements along these lines are as follows.
\begin{itemize}
    \item $2$-Segal paracyclic sets correspond to Frobenius pseudomonoids in $\Span$ \cite{CMS}*{Theorem 4.2},
    \item $2$-Segal cyclic sets correspond to Calabi-Yau objects in $\Span$ (a Calabi-Yau object is the coherent structure corresponding to a symmetric Frobenius algebra) \cite{Stern:span}*{Theorem 3.29},
    \item $2$-Segal $\Gamma$-sets correspond to commutative pseudomonoids in $\Span$ \cite{CMS}*{Theorem 5.6}.
\end{itemize}

Given a $2$-Segal cosymmetric set, it is immediate from \eqref{eqn:categories} that there are induced paracyclic and $\Gamma$-sets with the same underlying $2$-Segal simplicial set. Via the above correspondences, it follows that a $2$-Segal cosymmetric set induces a pseudomonoid in $\Span$ with both Frobenius and commutative structures.

Our main result, which we will prove in Section \ref{sec:main}, provides the following converse result: given paracyclic and $\Gamma$-sets with the same underlying $2$-Segal simplicial set, there is a unique lift to a $2$-Segal cosymmetric set. Thus, there is a correspondence between $2$-Segal cosymmetric sets and commutative Frobenius pseudomonoids in $\Span$.

\subsection{Partial monoids and their nerves}\label{sec:partialmonoid}

A \emph{partial monoid} \cite{Segal:Conf} is a set $M$ equipped with a partially-defined operation $M_2 \to M$, $(x,y) \mapsto x\cdot y$, for $M_2 \subseteq M \times M$, satisfying the following:
\begin{enumerate}
    \item (Associativity) $(x\cdot y)\cdot z = x \cdot (y \cdot z)$ for all $x,y,z \in M$,
    \item (Identity) There exists an element $e \in M$ such that $e \cdot x = x \cdot e = x$ for all $x \in M$.
\end{enumerate}
In the above, equal signs are interpreted to mean that either both sides are defined and equal or both sides are undefined.

A morphism $\varphi:M \to N$ of partial monoids is a map such that, if $x \cdot y$ is defined for $x,y \in M$, then $\varphi(x) \cdot \varphi(y)$ is defined and equal to $\varphi(x\cdot y)$. Note that this does allow for the possibility that $x \cdot y$ is undefined but $\varphi(x) \cdot \varphi(y)$ is defined. We denote the category of partial monoids by $\PMon$.

Given a partial monoid, one can form a simplicial set $N_\bullet M$, called the \emph{nerve} of $M$, where $N_0 M$ has a single point, and where $N_n M$ consists of fully composable $n$-tuples:
\begin{equation} \label{eqn:partialmonoidnerve}
N_n M = \left\{ (x_1,\dots,x_n) \in M^n \suchthat x_1 \cdots x_n \text{ is defined}\right\}.
\end{equation}
The face and degeneracy maps are given by formulae that are identical to those for the nerve of a monoid.

It was shown in \cite{BOORS} that the nerve of a partial monoid is $2$-Segal, but we remark that the verification is formally identical to that of the nerve of a category, and in fact both constructions can be seen as special cases of the nerve of a \emph{partial category} (see \cite{MM:2segal}).

The following was observed in \cite{CMS}, but because it will play a significant role for us later, we will discuss it in a bit more depth.
\begin{prop}\label{prop:commpartialmonoidgamma}
If $M$ is a commutative partial monoid, then the permutation action of $S_n$ on $N_n M$ gives $N_\bullet M$ the structure of a $\Gamma$-set.  
\end{prop}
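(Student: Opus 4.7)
The plan is to define the transpositions $\theta_i^n \colon N_n M \to N_n M$ for $1 \leq i \leq n-1$ by coordinate swaps,
\[
\theta_i^n(x_1, \ldots, x_n) = (x_1, \ldots, x_{i-1}, x_{i+1}, x_i, x_{i+2}, \ldots, x_n),
\]
and to verify (a) that these maps are well-defined on $N_n M$, (b) the Moore relations \eqref{eqn:moore1}--\eqref{eqn:moore3}, and (c) the mixed relations \eqref{eqn:mixed1b}--\eqref{eqn:dn}. Commutativity of $M$ enters in precisely two places: step (a), and the verification of $d_i \theta_i = d_i$ from \eqref{eqn:mixed3b}. Everything else will follow from associativity together with the standard nerve formulas for $N_\bullet M$.

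For (a), suppose $(x_1, \ldots, x_n) \in N_n M$, so $x_1 \cdots x_n$ is defined in $M$. Associativity lets us regroup the product as $(x_1 \cdots x_{i-1}) \cdot (x_i x_{i+1}) \cdot (x_{i+2} \cdots x_n)$, from which $x_i x_{i+1}$ is defined; commutativity then gives that $x_{i+1} x_i$ is defined and equal, and reassembling by associativity shows $\theta_i^n(x_1, \ldots, x_n)$ again lies in $N_n M$. Since adjacent transpositions generate $S_n$ with exactly the Moore relations, this iterates to a well-defined permutation action of $S_n$ on $N_n M$, settling both (a) and (b) simultaneously.

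For (c), each mixed relation is verified by direct computation using the nerve formulas: for $0 < i < n$, $d_i$ multiplies adjacent coordinates, $d_0$ and $d_n$ drop the first and last coordinates respectively, and $s_j$ inserts the identity element at position $j+1$. Most cases are immediate because $\theta_i$ and the relevant face or degeneracy act on disjoint coordinates. The relation $d_i \theta_i = d_i$ is where commutativity reappears, since both sides compute $(x_1, \ldots, x_{i-1}, x_i x_{i+1}, x_{i+2}, \ldots, x_n)$ via $x_i x_{i+1} = x_{i+1} x_i$. The most interesting check is \eqref{eqn:dn}: a telescoping computation shows that $\theta_1^n \circ \cdots \circ \theta_{n-1}^n$ sends $(x_1, \ldots, x_n)$ to the cyclic rotation $(x_n, x_1, \ldots, x_{n-1})$, so applying $d_0^n$ yields $(x_1, \ldots, x_{n-1}) = d_n^n(x_1, \ldots, x_n)$. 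The proof is essentially mechanical, with the only conceptual step being (a), where commutativity is indispensable; the remainder is careful bookkeeping across the indexing conventions of the mixed relations.
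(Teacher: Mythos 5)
Your proof is correct, but it takes the route the paper explicitly mentions in one sentence and then declines to carry out: the direct generator-and-relation verification. The paper's actual proof is functorial. It realizes $N_\bullet M$ as $\Hom_{\PMon}(P_\bullet, M)$, where $P_n$ is the power set of $\{1,\dots,n\}$ with the disjoint-union partial monoid structure of Example \ref{ex:disjointunion}, and observes that $[n] \mapsto P_n$, $f \mapsto f^{-1}$ defines a functor $\Phi_*^\op \to \PMon$ (a special case of the plasmic nerve construction). The $\Gamma$-set structure then comes for free from functoriality, with commutativity of $M$ entering only in the identification $\Hom_{\PMon}(P_n, M) \cong N_n M$: a morphism out of $P_n$ is determined by the images $x_i$ of the singletons, and commutativity is what makes every composable tuple arise this way. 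Your computation, by contrast, is elementary and self-contained, and it correctly isolates where commutativity is indispensable (definedness of the swapped tuple, and $d_i\theta_i = d_i$ from \eqref{eqn:mixed3b}); your telescoping verification of \eqref{eqn:dn} is also right, with $\theta_1^n \cdots \theta_{n-1}^n$ acting as the cyclic rotation $(x_1,\dots,x_n) \mapsto (x_n, x_1, \dots, x_{n-1})$ so that $d_0$ reproduces $d_n$. What the paper's approach buys is a genuine generalization, recorded in the remark immediately after its proof: applied to a noncommutative $M$, the same construction still yields a $2$-Segal $\Gamma$-set, namely the simplicial subset of composable tuples whose entries pairwise commute --- a phenomenon invisible from the direct-check perspective. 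One small point worth making explicit in your step (a): in a partial monoid, definedness of $x_1 \cdots x_n$ does imply definedness of the sub-product $x_i \cdot x_{i+1}$, but this relies on the paper's associativity convention (both sides defined and equal, or both undefined), which is what licenses your regrouping; a sentence acknowledging this would close the only tacit step in an otherwise complete argument.
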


\begin{proof}
The statement can be proved by a straightforward check that the transpositions satisfy \eqref{eqn:mixed1b}--\eqref{eqn:dn}. However, it is also possible to directly construct $N_\bullet M$ as a $\Gamma$-set using the following approach, which is a special case of the \emph{plasmic nerve} construction of \cite{beardsley-nakamura:plasmas}. 

For each $n \geq 0$, let $P_n = P(\{1, \dots, n\})$ be the power set of $\{1,\dots, n\}$, with the commutative partial monoid structure given by disjoint union (see Example \ref{ex:disjointunion}). Any $f \in \Hom_{\Phi_*}([m],[n])$ induces a map $f^*: P_n \to P_m$, given by $f^*(A) = f^{-1}(A)$, which is a morphism of partial monoids. This defines a functor $\Phi_*^\op \to \PMon$ which, at the level of objects, takes $[n]$ to $P_n$.

Given a commutative partial monoid $M$, we can then define 
\[X_n = \Hom_{\PMon}(P_n, M),\]
and it immediately follows from the above that $X_\bullet$ has the structure of a $\Gamma$-set. It then remains to check that this definition of $X_n = N_n M$ as defined in \eqref{eqn:partialmonoidnerve}. To see this, we observe that a morphism $\phi:P_n \to M$ is uniquely given by elements $x_i = \phi(\{i\}) \in M$, for $i = 1, \dots, n$, such that $\prod x_i$ is defined. One can then derive the formulae for face and degeneracy maps from the definition of $\cut$ in Section \ref{sec:phistar}. 
\end{proof}

\begin{remark}
The construction described in the proof of Proposition \ref{prop:commpartialmonoidgamma} produces a $2$-Segal $\Gamma$-set  even when $M$ is noncommutative. In this case, $X_\bullet$ is not isomorphic to the nerve of $M$. Rather, it is the simplicial subset of the nerve where $X_n$ consists of fully composable $n$-tuples $(x_1,\dots, x_n)$ such that $x_i x_j = x_j x_i$ for all $i,j$.
\end{remark}

We conclude with some illustrative examples of partial monoids.
\begin{example}
A monoid is a special case of a partial monoid.
\end{example}
\begin{example}\label{ex:L}
Let $L$ be a natural number, and let $M = \{0, 1, \dots, L\}$. Then the partial addition operation (defined when the sum is $\leq L$) gives $M$ the structure of a commutative partial monoid. Its nerve is given by
        \[ N_n M = \left\{(x_1,\dots, x_n) \in M^n \suchthat \sum x_i \leq L\right\}.\]
\end{example}
\begin{example}\label{ex:disjointunion}
Let $S$ be a set, and let $M = P(S)$ be the power set of $S$. For $A, B \in P(S)$, we say that the disjoint union $A \sqcup B$ is undefined if $A \cap B \neq \emptyset$, and $A \sqcup B = A \cup B$ if $A \cap B = \emptyset$. The disjoint union operation gives $M$ the structure of a commutative partial monoid. Its nerve is given by
        \[ N_n M = \left\{(A_1, \dots, A_n) \in M^n \suchthat A_i \cap A_j = \emptyset \text{ for all } i \neq j\right\}.\]
\end{example}
\begin{example}\label{ex:directsum}
Let $V$ be a vector space, and let $M$ be the set of subspaces of $V$. For $W_1, W_2 \in M$, we say that the direct sum $W_1 \oplus W_2$ is undefined if $W_1 \cap W_2 \neq \{0\}$, and $W_1 \oplus W_2 = W_1 + W_2$ if $W_1 \cap W_2 = \{0\}$. The direct sum operation gives $M$ the structure of a commutative partial monoid. Its nerve is given by
        \[ N_n M = \left\{(W_1,\dots,W_n) \in M^n \suchthat W_i \cap \left({\textstyle \bigoplus_{j \neq i}} W_j\right) = \{0\} \text{ for all } i\right\}.\]
\end{example}
\begin{example}\label{ex:orthogonalsum}
Let $\mathcal{H}$ be a Hilbert space, and let $M$ be the set of subspaces of $\mathcal{H}$. For $W_1, W_2 \in M$, we say that the orthogonal sum $W_1 \boxplus W_2$ is undefined if $W_1 \not\perp W_2$, and $W_1 \boxplus W_2 = W_1 \oplus W_2$ if $W_1 \perp W_2$. The orthogonal sum operation gives $M$ the structure of a commutative partial monoid. Its nerve is given by
        \[ N_n M = \left\{(W_1,\dots,W_n) \in M^n \suchthat W_i \perp W_j \text{ for all } i\neq j \right\}.\]
\end{example}

\section{Main result}
\label{sec:main}

\begin{thm}\label{thm:main}
Let $P: \Lambda_\infty^\op \to \set$ be a paracyclic set, and let $Q: \Phi_* \to \set$ be a $\Gamma$-set, and suppose that $P$ and $Q$ have the same underlying $2$-Segal simplicial set. Then $P$ is cyclic, and there is a unique cosymmetric set $F: \Phi \to \set$ such that the diagram
\begin{center}
\begin{tikzcd}
                                 & \Lambda^\op \arrow[rd] \arrow[rrd, "P"', bend left] &                     &      \\
\Delta^\op \arrow[ru] \arrow[rd] &                                         & \Phi \arrow[r, "F"] & \set \\
                                 & \Phi_* \arrow[ru] \arrow[rru, "Q", bend right]     &                     &     
\end{tikzcd}
\end{center}
commutes.
\end{thm}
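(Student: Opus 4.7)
Uniqueness is immediate. As recalled in Section~\ref{sec:phi}, every $\phi \in \Hom_{\Phi}([m],[n])$ has a unique factorization $\phi = (\tau^n)^k \circ g$ with $g \in \Hom_{\Phi_*}([m],[n])$ and $k \in \Z/(n+1)\Z$, so commutativity of the diagram forces $F(\phi) = P(\tau^n)^k \circ Q(g)$. For existence, I would take this same formula as the definition of $F$ and verify that it assembles into a functor. Because the relations of $\Phi_*$ are automatically satisfied by $Q$ and those of $\Lambda_\infty$ by $P$, well-definedness and functoriality reduce to checking two additional families of relations in $\Phi$: (i) the cyclicity relation $(\tau^n)^{n+1} = \id$ (which, once established, gives both that the representative $k$ is well-defined modulo $n+1$ and that $P$ is cyclic), and (ii) the mixed relations \eqref{eqn:tautheta} relating $\tau^n$ and $\theta_i^n$.

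Both (i) and (ii) are equalities of endomorphisms of $X_n$, and the plan for verifying them is to exploit the rigidity of the $2$-Segal condition. Iterating Lemma~\ref{lemma:nnpullback}, for $n \geq 3$ the set $X_n$ is an iterated fibre product of copies of $X_2$ over $X_1$ along face maps; consequently, any two endomorphisms of $X_n$ that agree after composition with all face maps $d_i^n : X_n \to X_{n-1}$ must coincide. I would compute these face projections for both sides of each relation using \eqref{eqn:paracyclicface} and \eqref{eqn:mixed1b}--\eqref{eqn:dn}, which reduces the verification at dimension $n$ to one at dimension $n-1$. A downward induction then handles all $n \geq 3$; cyclicity also propagates downward in dimension, since $d_0 : X_{n+1} \to X_n$ is split surjective and a direct iteration of \eqref{eqn:paracyclicface} yields $d_0 \, (\tau^{n+1})^{n+2} = (\tau^n)^{n+1} \, d_0$.

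The hard part, therefore, is the base case, since neither $X_1$ nor $X_2$ is itself a $2$-Segal pullback, so face projections alone cannot detect equality of endomorphisms. My plan for this step is to isolate the low-dimensional interaction between the paracyclic and $\Gamma$ structures. The paracyclic relations \eqref{eqn:paracyclicdegen} yield $s_0^1 (\tau^1)^2 = (\tau^2)^3 s_0^1$ and $s_1^1 (\tau^1)^2 = (\tau^2)^3 s_1^1$, while the $\Gamma$ relation $\theta_1^2 s_0^1 = s_1^1$ (the case $i = j+1$ of \eqref{eqn:mixed1b}) ties these two sections together through $\theta_1^2$. Combining these identities with the relation $d_0^2 \theta_1^2 = d_2^2$ from \eqref{eqn:dn} and the fact that the degeneracies $s_0^1, s_1^1$ are sections of several face maps, I expect to force $\tau^0 = \id_{X_0}$, $(\tau^1)^2 = \id_{X_1}$, and the $n = 2$ case of \eqref{eqn:tautheta} simultaneously, providing the seed for the induction above. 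This interplay on $X_2$ is precisely where the hypothesis that $P$ and $Q$ share the same underlying $2$-Segal set is essentially used.
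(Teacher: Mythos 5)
Your global reduction is the same as the paper's: uniqueness via the factorization $f = (\tau^n)^k \circ g$, and existence reduced to verifying \eqref{eqn:cyclicperm} and \eqref{eqn:tautheta}. Your inductive scaffolding for $n \geq 3$ is also sound in outline (joint injectivity of suitable pairs of face maps does follow from the $2$-Segal squares, and your computation $d_0 (\tau^{n+1})^{n+2} = (\tau^n)^{n+1} d_0$ checks out). But there is a genuine gap exactly where you locate the difficulty: the base case is never proved, only ``expected to force.'' Worse, the ingredients you list for it --- $s_0^1 (\tau^1)^2 = (\tau^2)^3 s_0^1$, $s_1^1 (\tau^1)^2 = (\tau^2)^3 s_1^1$, $\theta_1^2 s_0^1 = s_1^1$, $d_0^2 \theta_1^2 = d_2^2$, and the section identities $d_i s_j = \id$ --- are all formal consequences of the paracyclic and $\Gamma$ relations alone, and hold for \emph{any} pair of such structures on a common simplicial set, $2$-Segal or not. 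Consequently no equational manipulation of these identities can force $(\tau^1)^2 = \id$ or the $n=2$ case of \eqref{eqn:tautheta}: if it could, cyclicity would follow without the $2$-Segal hypothesis at all. You assert that the shared $2$-Segal structure ``is essentially used'' in this step, but your proposal supplies no mechanism by which it enters, since the only $2$-Segal input you invoke anywhere is joint injectivity of face-map pairs, which, as you yourself observe, is unavailable on $X_1$ and $X_2$.

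The missing idea is a $2$-Segal statement about \emph{degeneracies} rather than faces. The paper proves (Lemma~\ref{lemma:paracyclicpullback}, via Lemma~\ref{lemma:nnpullback}, the prism lemma, and a retract argument) that the squares formed by the extra degeneracies $s_{n+1}^n = \tau^{n+1} s_0^n$ and the faces $d_i$, $1 \leq i \leq n$, are pullbacks; hence (Remark~\ref{remark:paraunital}) any $\omega \in X_{n+1}$ with $d_i \omega \in \im(s_n^{n-1})$ satisfies $\omega = s_{n+1} d_{n+1} \omega$. Applied to $\omega = \theta_i s_{n+1} x$, using $d_i \theta_i = d_i$ from \eqref{eqn:mixed3b}, this yields Lemma~\ref{lemma:stautheta}, in particular $\theta_n^{n+1} \cdots \theta_1^{n+1} s_{n+1}^n = s_{n+1}^n \tau^n$; then both \eqref{eqn:cyclicperm} and \eqref{eqn:tautheta} follow \emph{for all $n$ simultaneously} from the symmetric-group identity $(\theta_n^{n+1} \cdots \theta_1^{n+1})^{n+1} = \id$ and the injectivity of $s_{n+1}^n$ (split by $d_{n+1}$), so the paper needs no induction on dimension at all. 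If you retain your induction, you still need this (or an equivalent) degeneracy-pullback input at $n \leq 2$: e.g.\ the $n=1$ instance gives $\theta_1^2 s_2^1 = s_2^1 \tau^1$, whence $s_2^1 (\tau^1)^2 = (\theta_1^2)^2 s_2^1 = s_2^1$ and $(\tau^1)^2 = \id$. A secondary issue: your inductive step for the $i = n-1$ case of \eqref{eqn:tautheta} involves wrap-around face computations that are asserted rather than checked, and the induction must carry cyclicity and \eqref{eqn:tautheta} together; that part is repairable with bookkeeping, but the base case, as proposed, is not.
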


Combining Theorem \ref{thm:main} with the results of \cite{CMS}, we immediately have the following.
\begin{cor}\label{cor:commfrob}
    There is a one-to-one correspondence (up to isomorphism) between $2$-Segal cosymmetric sets and commutative Frobenius pseudomonoids in the bicategory $\Span$.
\end{cor}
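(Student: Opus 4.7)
The plan is to construct the claimed bijection directly by combining Theorem \ref{thm:main} with the existing correspondences of \cite{CMS}. Starting from a commutative Frobenius pseudomonoid in $\Span$, its Frobenius pseudomonoid structure corresponds via \cite{CMS}*{Theorem 4.2} to a 2-Segal paracyclic set $P$, and its commutative pseudomonoid structure corresponds via \cite{CMS}*{Theorem 5.6} to a 2-Segal $\Gamma$-set $Q$. Both correspondences factor through the same underlying plain pseudomonoid, so $P$ and $Q$ have the same underlying 2-Segal simplicial set. Theorem \ref{thm:main} then supplies a unique 2-Segal cosymmetric set whose restrictions along $\Lambda_\infty^\op \to \Phi$ and $\Phi_* \to \Phi$ recover $P$ and $Q$ respectively (and, as a bonus, certifies that $P$ is cyclic, consistent with the existence of the extension).

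In the opposite direction, given a 2-Segal cosymmetric set $F: \Phi \to \set$, precomposition with the two side functors of the diagram appearing in Theorem \ref{thm:main} yields a 2-Segal paracyclic (in fact cyclic) set and a 2-Segal $\Gamma$-set on the common underlying simplicial set of $F$. Applying \cite{CMS}*{Theorem 4.2} and \cite{CMS}*{Theorem 5.6} converts these respectively into a Frobenius pseudomonoid and a commutative pseudomonoid, both structures supported on the same pseudomonoid (the one attached to the underlying 2-Segal set of $F$), hence together forming a commutative Frobenius pseudomonoid in $\Span$.

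The two constructions are mutually inverse up to isomorphism, essentially by the uniqueness clauses. Starting with a cosymmetric set $F$ and reassembling its paracyclic and $\Gamma$-restrictions via Theorem \ref{thm:main} returns $F$, since $F$ itself is a cosymmetric extension of those restrictions and the theorem asserts uniqueness. Starting with a commutative Frobenius pseudomonoid and passing through $P$, $Q$, and the assembled cosymmetric set loses no data, because the Frobenius and commutative components are separately and faithfully recorded by $P$ and $Q$. All substantive work lives in the three previously established equivalences, so the proof is an uncomplicated diagram chase and presents no real obstacle beyond assembling the pieces.
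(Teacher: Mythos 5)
Your proposal is correct and matches the paper's argument exactly: the paper proves Corollary \ref{cor:commfrob} in one line by combining Theorem \ref{thm:main} with \cite{CMS}*{Theorems 4.2 and 5.6}, which is precisely the assembly you spell out in both directions. Your elaboration of the mutual-inverse check via the uniqueness clause of Theorem \ref{thm:main} is a faithful unpacking of what the paper leaves implicit.
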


The remainder of this section will be devoted to proving Theorem \ref{thm:main}. Uniqueness is immediate from the presentations given in Section \ref{sec:structured}, since the images of $\Lambda^\op$ and $\Phi_*$ contain all of the generators of $\Phi$, so what needs to be shown is existence.

Suppose that $X_\bullet$ is a $2$-Segal set equipped with the following:
\begin{itemize}
    \item a paracyclic structure, i.e.\ invertible maps $\tau^n: X_n \to X_n$ satisfying \eqref{eqn:paracyclicface}--\eqref{eqn:paracyclicdegen};
    \item a $\Gamma$-structure, i.e.\ maps $\theta_i^n: X_n \to X_n$ for $1 \leq i \leq n-1$ satisfying \eqref{eqn:moore1}--\eqref{eqn:dn}.
\end{itemize}
To prove Theorem \ref{thm:main}, we need to show that the paracyclic structure is cyclic, i.e.\ that \eqref{eqn:cyclicperm} holds, and that the compatibility conditions \eqref{eqn:tautheta} hold.

We first observe that the extra degeneracy maps in a $2$-Segal paracyclic set satisfy analogues of the unitality property. The $n=1$, $i=1$ case of the following lemma appeared in \cite{CMS}*{Lemma 4.3}, although the proof was omitted there.
\begin{lemma}
\label{lemma:paracyclicpullback}
For all $1 \leq i \leq n$, the diagram
    \[
    \begin{tikzcd}
        X_n \arrow[r,"s_{n+1}"] \arrow[d,"d_i"] & X_{n+1} \arrow[d,"d_i"] \\
        X_{n-1} \arrow[r,"s_n"] & X_n
    \end{tikzcd}
    \]
    is a pullback.
\end{lemma}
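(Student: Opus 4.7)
The plan is to use the invertibility of the paracyclic translations to reduce the square to a purely simplicial pullback, which can then be handled by pasting with a $2$-Segal pullback.

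First, the square commutes by the extra-degeneracy identity $d_i s_{n+1} = s_n d_i$ in \eqref{eqn:paracyclicgamma1}. For the pullback property, I would use the formulas $s_{n+1} = \tau^{n+1} s_0$ and $s_n = \tau^n s_0$ together with the paracyclic relation $d_i \tau^{n+1} = \tau^n d_{i+1}$ (valid for $0 \leq i \leq n$) to exhibit the given square as a horizontal paste of a square built from $s_0, d_i, d_{i+1}$ and a second square whose horizontal maps are the isomorphisms $\tau^{n+1}$ and $\tau^n$. The latter is automatically a pullback, so by the pasting lemma the given square is a pullback if and only if
\[
\begin{tikzcd}
X_n \arrow[r,"s_0"] \arrow[d,"d_i"] & X_{n+1} \arrow[d,"d_{i+1}"] \\
X_{n-1} \arrow[r,"s_0"] & X_n
\end{tikzcd}
\]
is a pullback.

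For this reduced square, I would apply the pasting lemma again. In the range $1 \leq i \leq n-1$, paste $d_0 : X_{n+1} \to X_n$ on top and $d_0 : X_n \to X_{n-1}$ on the bottom; the right square becomes a $2$-Segal pullback from Definition \ref{def:2Segal}, while $d_0 s_0 = \mathrm{id}$ makes the outer rectangle the identity, which is trivially a pullback. For $i = n$ with $n \geq 2$, the same strategy works with $d_1$ in place of $d_0$, invoking Lemma \ref{lemma:nnpullback} (with $i'=1$, $j'=n$) on the right square.

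The main obstacle is the remaining base case $n = i = 1$, at which level neither Definition \ref{def:2Segal} nor Lemma \ref{lemma:nnpullback} offers any pullback to paste with. In this case the reduced square is precisely the left unitor square appearing in \eqref{eqn:unitors}, so the pullback property follows from the automatic unitality of $2$-Segal simplicial sets noted in Remark \ref{remark:unitality}.
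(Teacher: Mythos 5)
Your proof is correct, and it takes a genuinely different route from the paper's. Your key move is to conjugate by the invertible rotations: factoring $s_{n+1}=\tau^{n+1}s_0$ and $s_n=\tau^n s_0$ and pasting off the square of isomorphisms coming from $d_i\tau^{n+1}=\tau^n d_{i+1}$, you reduce the lemma to the family of $s_0$-squares with verticals $d_i$ and $d_{i+1}$ --- in effect recasting the statement as the $\tau$-twist of ordinary unitality. You then verify those reduced squares by pasting: against the left-hand $2$-Segal squares of Definition \ref{def:2Segal} for $1\leq i\leq n-1$, against Lemma \ref{lemma:nnpullback} (with indices $0<1<n\leq n$, hence $n\geq 2$) for $i=n$, and in the single remaining case $n=i=1$ by recognizing the left-unitor pullback of \eqref{eqn:unitors}, which holds by the automatic unitality of \cite{FGKW:unital} quoted in Remark \ref{remark:unitality}; all of these index checks are valid. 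The paper instead works with the extra degeneracies directly: for $1\leq i<n$ it pastes the original square against the right-hand $2$-Segal square with horizontals $d_{n+1}$ and $d_n$, using $d_{n+1}s_{n+1}=\id=d_n s_n$, and for $i=n$ --- including $n=1$ --- it exhibits the square as a retract of the already-proved $i=n<n+1$ square one level up, using that retracts of pullback squares are pullbacks. The trade-off: your reduction is conceptually illuminating (after it, you could in fact have invoked unitality wholesale, since the reduced squares are exactly the degeneracy--face pullbacks that hold in any $2$-Segal set), but it uses \cite{FGKW:unital} as a black box for the base case; the paper's retract trick avoids citing that result --- its argument is instead formally identical to the \emph{proof} of \cite{FGKW:unital}*{Proposition 2.1} --- and handles $i=n$ uniformly with no separate low-dimensional case.
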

\begin{proof}
    The proof is formally identical to the proof of \cite{FGKW:unital}*{Proposition 2.1}, but we write it out for the sake of completeness.

    For $1 \leq i < n$, consider the following diagram.
    \[
    \begin{tikzcd}
        X_n \arrow[r,"s_{n+1}"] \arrow[d,"d_i"] & X_{n+1} \arrow[d,"d_i"] \arrow[r,"d_{n+1}"] & X_n \arrow[d,"d_i"]\\
        X_{n-1} \arrow[r,"s_n"] & X_n \arrow[r,"d_n"] & X_{n-1}
    \end{tikzcd}
    \]
    By Lemma \ref{lemma:nnpullback}, the square on the right is a pullback. The compositions of the horizontal maps are identity maps, so the big square is a pullback. By the Prism Lemma, it follows that the left square is a pullback.

For the case $i=n$, consider the following diagram.
\[
\begin{tikzcd}[sep=small]
X_n \arrow[rd, "s_{n+1}"] \arrow[dd, "d_n"'] \arrow[rrr, "s_{n+1}"] & &  & X_{n+1} \arrow[rd, "s_{n+2}"] \arrow[dd, "d_n"' near start] \arrow[rrr, "d_{n+1}"] &  &  & X_n \arrow[rd, "s_{n+1}"] \arrow[dd, "d_n"' near start] & \\
& X_{n+1}  \arrow[rrr, crossing over, "s_{n+1}" near start] & & & X_{n+2}  \arrow[rrr, crossing over, "d_{n+1}" near start] &  &  & X_{n+1} \arrow[dd,"d_n"] \\
X_{n-1} \arrow[rd, "s_n"'] \arrow[rrr, "s_n"' near end]  &  &  & X_n \arrow[rd, "s_{n+1}"'] \arrow[rrr, "d_n"' near end]  &  & & X_{n-1} \arrow[rd, "s_n"']  &   \\ 
& X_n \arrow[from=uu, crossing over, "d_n" near start] \arrow[rrr, "s_n"']  &  &    & X_{n+1} \arrow[from=uu, crossing over, "d_n" near start] \arrow[rrr, "d_n"'] &  & & X_n
\end{tikzcd}
\]
This diagram presents the square on the left and right ends as a retract of the middle square, which we have already shown is a pullback. Since retracts of pullback squares are pullbacks, the result follows.
\end{proof}
\begin{remark}\label{remark:paraunital}
If $\omega \in X_{n+1}$ is such that $d_i \omega \in \im(s_n^{n-1})$ for some $1 \leq i \leq n$, then Lemma \ref{lemma:paracyclicpullback} implies that $\omega \in \im(s_{n+1}^n)$. Writing $\omega = s_{n+1} \eta$, we can solve for $\eta$ by applying $d_{n+1}$ to both sides, giving us
\[ \omega = s_{n+1} d_{n+1} \omega.\]
\end{remark}

Next, we will deduce some interactions that arise between the maps $s_{n+1}^n$ and $\tau^n$, coming from the paracyclic structure, and the maps $\theta_i^n$, coming from the $\Gamma$-structure.
\begin{lemma}\label{lemma:stautheta}
    The following are satisfied:
    \begin{enumerate}
        \item $\theta_i^{n+1} s_{n+1}^n = s_{n+1}^n d_{n+1}^{n+1} \theta_i^{n+1} s_{n+1}^n$ for $1 \leq i \leq n$,
        \item $\theta_i^{n+1} s_{n+1}^n = s_{n+1}^n \theta_i^n$ for $1 \leq i \leq n-1$,
        \item $\theta_n^{n+1} \cdots \theta_1^{n+1} s_{n+1}^n = s_{n+1}^n \tau^n$,
    \end{enumerate}
\end{lemma}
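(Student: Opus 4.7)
The plan is to handle the three parts in order, with (2) and (3) both leveraging (1). The workhorse throughout is Remark \ref{remark:paraunital}: an element $\omega \in X_{n+1}$ is in the image of $s_{n+1}^n$ as soon as some $d_j^{n+1}\omega$ with $1 \le j \le n$ lies in the image of $s_n^{n-1}$, in which case $\omega = s_{n+1}^n d_{n+1}^{n+1}\omega$. For part (1), apply this with $j=i$ to $\omega = \theta_i^{n+1} s_{n+1}^n x$: the mixed $\Gamma$-relation $d_i \theta_i = d_i$ of \eqref{eqn:mixed3b} and the paracyclic relation $d_i^{n+1} s_{n+1}^n = s_n^{n-1} d_i^n$ of \eqref{eqn:paracyclicgamma1} together show $d_i \omega = s_n^{n-1} d_i x$, which lies in the image of $s_n^{n-1}$; the remark's conclusion is precisely the stated formula.

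For part (2), by (1) we have $\theta_i^{n+1} s_{n+1}^n = s_{n+1}^n \eta_i$ with $\eta_i = d_{n+1}^{n+1} \theta_i^{n+1} s_{n+1}^n$, and I would aim to prove the stronger purely $\Gamma$-set commutation
\[ d_{n+1}^{n+1} \theta_i^{n+1} = \theta_i^n d_{n+1}^{n+1} \qquad (1 \le i \le n-1). \]
Post-composing with $s_{n+1}^n$ and using $d_{n+1}^{n+1} s_{n+1}^n = \id$ from \eqref{eqn:paracyclicgamma1} then forces $\eta_i = \theta_i^n$, which is (2). The displayed commutation is proved inside $\Phi_*$: expand $d_{n+1}^{n+1} = d_0^{n+1} \theta_1^{n+1} \cdots \theta_n^{n+1}$ using \eqref{eqn:dn}; slide $\theta_i$ leftward past $\theta_n, \ldots, \theta_{i+2}$ via the commutation \eqref{eqn:moore3}; apply the braid $\theta_i \theta_{i+1} \theta_i = \theta_{i+1} \theta_i \theta_{i+1}$ from \eqref{eqn:moore2}; slide the resulting leading $\theta_{i+1}$ further left past $\theta_{i-1}, \ldots, \theta_1$; and finally transport $\theta_i$ across $d_0$ using the $i \ge 1$ case $d_0^{n+1} \theta_{i+1}^{n+1} = \theta_i^n d_0^{n+1}$ of \eqref{eqn:thetad}. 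The tail $d_0^{n+1} \theta_1^{n+1} \cdots \theta_n^{n+1}$ then refolds to $d_{n+1}^{n+1}$ by \eqref{eqn:dn}.

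For part (3), iterating (1) gives $\theta_n^{n+1} \cdots \theta_1^{n+1} s_{n+1}^n = s_{n+1}^n \alpha$ with $\alpha = d_{n+1}^{n+1}(\theta_n^{n+1} \cdots \theta_1^{n+1} s_{n+1}^n)$. Expanding $d_{n+1}^{n+1}$ via \eqref{eqn:dn} produces the palindrome $d_0^{n+1} \theta_1^{n+1} \cdots \theta_n^{n+1} \theta_n^{n+1} \cdots \theta_1^{n+1} s_{n+1}^n$, in which successive applications of the involution $\theta_j^2 = \id$ from \eqref{eqn:moore1} telescope all of the transpositions, leaving $\alpha = d_0^{n+1} s_{n+1}^n = \tau^n$ by \eqref{eqn:paracyclicgamma1}.

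I expect the main obstacle to be the braid bookkeeping in part (2) needed to establish the key $\Gamma$-set commutation $d_{n+1}^{n+1} \theta_i^{n+1} = \theta_i^n d_{n+1}^{n+1}$. Once that identity is in hand, both (2) and (3) reduce to short calculations using only the paracyclic relations $d_{n+1} s_{n+1} = \id$ and $d_0 s_{n+1} = \tau^n$ together with the involution $\theta_j^2 = \id$.
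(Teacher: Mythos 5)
Your proposal is correct and follows essentially the same route as the paper: part (1) via Remark \ref{remark:paraunital} applied to $\theta_i s_{n+1}\omega$ using $d_i\theta_i = d_i$ and $d_i s_{n+1} = s_n d_i$, part (2) by commuting $\theta_i$ past $d_{n+1}^{n+1}$ and cancelling with $d_{n+1}^{n+1} s_{n+1}^n = \id$, and part (3) by reducing to $d_0^{n+1} s_{n+1}^n = \tau^n$ through the telescoping of the palindrome $d_0 \theta_1 \cdots \theta_n \theta_n \cdots \theta_1$. The only divergence is minor: where the paper invokes \eqref{eqn:thetad} directly for the top face map $d_{n+1}^{n+1}$, you rederive the needed commutation $d_{n+1}^{n+1}\theta_i^{n+1} = \theta_i^n d_{n+1}^{n+1}$ from \eqref{eqn:dn}, the Moore relations, and the $j=0$ case of \eqref{eqn:thetad} --- a slightly longer but equally valid (arguably more careful) step, since \eqref{eqn:dn} marks the top face map as special among the generators.
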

\begin{proof}
For $\omega \in X_n$, consider $\theta_i s_{n+1} \omega \in X_{n+1}$ for any $1 \leq i \leq n$. Since $d_i \theta_i s_{n+1} \omega= d_i s_{n+1} \omega = s_n d_i \omega$, we have by Remark \ref{remark:paraunital} that
\[ \theta_i s_{n+1} \omega = s_{n+1}d_{n+1} \theta_i s_{n+1} \omega,\]
which proves the first statement.

Using the first statement, together with \eqref{eqn:thetad} and \eqref{eqn:paracyclicgamma1}, we have, when $i \leq n-1$,
\begin{align*}
    \theta_i^{n+1} s_{n+1}^n &= s_{n+1}^n d_{n+1}^{n+1} \theta_i^{n+1} s_{n+1}^n \\
    &= s_{n+1}^n  \theta_i^{n+1} d_{n+1}^{n+1} s_{n+1}^n \\
    &= s_{n+1}^n  \theta_i^{n+1},  
\end{align*}
which proves the second statement.

Using the first and second statements, together with \eqref{eqn:dn}, we have
\begin{align*}
    \theta_n^{n+1} \cdots \theta_1^{n+1} s_{n+1}^n &= 
     \theta_n^{n+1} s_{n+1}^n  \theta_{n-1}^{n+1} \cdots \theta_1^{n+1} \\
&= s_{n+1}^n d_{n+1}^{n+1} \theta_n^{n+1} s_{n+1}^n \theta_{n-1}^{n+1} \cdots \theta_1^{n+1}\\
     &= s_{n+1}^n d_{n+1}^{n+1} \theta_n^{n+1} \cdots \theta_1^{n+1} s_{n+1}^n \\
    &= s_{n+1}^n d_0^{n+1} s_{n+1}^n \\
    &= s_{n+1}^n \tau^n,
\end{align*}
which proves the third statement.
\end{proof}

\begin{prop}\label{prop:cyclic}
    The paracyclic structure on $X_\bullet$ is cyclic, i.e.\ $(\tau^n)^{n+1} = \id$ for all $n \geq 1$. 
\end{prop}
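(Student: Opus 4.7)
The plan is to leverage Lemma \ref{lemma:stautheta}(3), which says $\theta_n^{n+1} \cdots \theta_1^{n+1} s_{n+1}^n = s_{n+1}^n \tau^n$, to transport the question about $(\tau^n)^{n+1}$ from $X_n$ up to $X_{n+1}$, where the $\Gamma$-structure can be used. A straightforward induction on $k$ (each step just substitutes the lemma once inside $s_{n+1}^n (\tau^n)^k = s_{n+1}^n \tau^n \cdot (\tau^n)^{k-1}$) gives
\[ s_{n+1}^n (\tau^n)^{n+1} = \bigl(\theta_n^{n+1} \cdots \theta_1^{n+1}\bigr)^{n+1} s_{n+1}^n. \]
So it will suffice to prove two things: that the operator $\bigl(\theta_n^{n+1} \cdots \theta_1^{n+1}\bigr)^{n+1}$ acts as the identity on $X_{n+1}$, and that $s_{n+1}^n$ is injective.

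For the first point, the Moore relations \eqref{eqn:moore1}--\eqref{eqn:moore3} satisfied by $\theta_i^{n+1}$ for $1 \leq i \leq n$ are exactly the Coxeter presentation of $S_{n+1}$ in terms of adjacent transpositions, so these maps determine an action of $S_{n+1}$ on $X_{n+1}$ in which $\theta_i^{n+1}$ acts as the transposition $(i,i+1)$. A direct tracking of elements shows that $\theta_n^{n+1} \theta_{n-1}^{n+1} \cdots \theta_1^{n+1}$ corresponds to the permutation sending $1 \mapsto n+1$ and $k \mapsto k-1$ for $2 \leq k \leq n+1$, i.e.\ the single $(n+1)$-cycle $(1,\, n+1,\, n,\, \ldots,\, 2)$. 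This cycle has order $n+1$ in $S_{n+1}$, so its $(n+1)$-th power is the identity permutation, and hence $\bigl(\theta_n^{n+1} \cdots \theta_1^{n+1}\bigr)^{n+1}$ acts as the identity on $X_{n+1}$.

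Injectivity of $s_{n+1}^n$ is essentially automatic: the relation $d_{n+1}^{n+1} s_{n+1}^n = \id$ from \eqref{eqn:paracyclicgamma1} gives a left inverse. Combining these facts, the displayed equation collapses to $s_{n+1}^n (\tau^n)^{n+1} = s_{n+1}^n$, and injectivity then yields $(\tau^n)^{n+1} = \id$. The only potentially subtle point is the second step: one has to correctly identify the product of adjacent transpositions as a genuine $(n+1)$-cycle in $S_{n+1}$ (a routine but easy-to-miscount computation), after which the rest is formal.
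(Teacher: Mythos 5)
Your proof is correct and follows essentially the same route as the paper's: iterate Lemma \ref{lemma:stautheta}(3) to get $s_{n+1}^n(\tau^n)^{n+1} = (\theta_n^{n+1}\cdots\theta_1^{n+1})^{n+1}s_{n+1}^n$, recognize $\theta_n^{n+1}\cdots\theta_1^{n+1}$ as an $(n+1)$-cycle so its $(n+1)$-th power acts trivially, and cancel by injectivity of $s_{n+1}^n$. Your two added details --- deriving the cycle identity from the Moore/Coxeter presentation rather than directly in $\Phi_*$, and justifying injectivity via the retraction $d_{n+1}^{n+1}s_{n+1}^n = \id$ from \eqref{eqn:paracyclicgamma1} --- are both valid and merely make explicit what the paper leaves implicit.
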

\begin{proof}
We note that, in $\Phi_*$, the morphism $\theta_n^{n+1} \cdots \theta_1^{n+1}$ cyclically permutes the elements $\{1,\dots, n+1\} \subset [n+1]$, so the identity $(\theta_n^{n+1} \cdots \theta_1^{n+1})^{n+1} = \id$ holds. 

Using the third statement in Lemma \ref{lemma:stautheta}, we have
    \begin{align*}
s_{n+1}^n (\tau^n)^{n+1} &= (\theta_n^{n+1} \cdots \theta_1^{n+1})^{n+1} s_{n+1}^n \\
&= s_{n+1}^n.
    \end{align*}
The result follows from the fact that $s_{n+1}^n$ is injective.
\end{proof}

\begin{prop}\label{prop:cyclicswap}
    Equation \eqref{eqn:tautheta} is satisfied.
\end{prop}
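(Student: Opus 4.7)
The plan is to apply $s_{n+1}^n$ on the left to both sides of \eqref{eqn:tautheta} and exploit injectivity of $s_{n+1}^n$, which follows immediately from $d_{n+1}^{n+1} s_{n+1}^n = \id$ in \eqref{eqn:paracyclicgamma1}. Writing $\rho_{n+1} := \theta_n^{n+1}\cdots\theta_1^{n+1}$, parts (2) and (3) of Lemma \ref{lemma:stautheta} give precisely the commutation rules needed to push $s_{n+1}^n$ rightward past any $\tau^n$ (converting it into $\rho_{n+1}$) and past any $\theta_j^n$ with $1 \leq j \leq n-1$ (converting it into $\theta_j^{n+1}$). Applying these rules repeatedly will express each side of \eqref{eqn:tautheta}, precomposed with $s_{n+1}^n$, as a product of the $\theta_j^{n+1}$'s followed by $s_{n+1}^n$.

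Carrying out this reduction, for $1 \leq i < n-1$ the left side becomes $\theta_i^{n+1}\rho_{n+1}\, s_{n+1}^n$ and the right side becomes $\rho_{n+1}\theta_{i+1}^{n+1}\, s_{n+1}^n$; for $i = n-1$, applying Lemma \ref{lemma:stautheta}(3) twice and then Lemma \ref{lemma:stautheta}(2) to each of the $n-1$ factors $\theta_j^n$ on the right, the left side becomes $\theta_{n-1}^{n+1}\rho_{n+1}\, s_{n+1}^n$ and the right side becomes $\rho_{n+1}^2\,\theta_1^{n+1}\cdots\theta_{n-1}^{n+1}\, s_{n+1}^n$. Injectivity of $s_{n+1}^n$ thus reduces the proposition to two identities to be checked as endomorphisms of $X_{n+1}$: $\theta_i^{n+1}\rho_{n+1} = \rho_{n+1}\theta_{i+1}^{n+1}$ for $1 \leq i < n-1$, and $\theta_{n-1}^{n+1}\rho_{n+1} = \rho_{n+1}^2\,\theta_1^{n+1}\cdots\theta_{n-1}^{n+1}$.

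Since the Moore relations \eqref{eqn:moore1}--\eqref{eqn:moore3} hold for the $\Gamma$-structure on $X_\bullet$ and present $S_{n+1} \cong \Aut_{\Phi_*}([n+1])$, it suffices to check both identities as equalities of permutations. The element $\rho_{n+1}$ acts on $[n+1]$ as the cycle fixing $0$, sending $1\mapsto n+1$, and sending $k \mapsto k-1$ for $k \geq 2$, so conjugation by $\rho_{n+1}$ carries the transposition $(i+1,i+2)$ to $(i, i+1)$ for every $1 \leq i \leq n-1$; this is exactly the first identity. For the second, the same conjugation (with $i = n-1$) gives $\theta_{n-1}^{n+1}\rho_{n+1} = \rho_{n+1}\theta_n^{n+1}$, while a direct computation shows $\rho_{n+1}\,\theta_1^{n+1}\cdots\theta_{n-1}^{n+1}$ equals the transposition $(n, n+1) = \theta_n^{n+1}$; combining these yields the claim. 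The main obstacle I foresee is the index bookkeeping in the reduction step, since each invocation of Lemma \ref{lemma:stautheta}(2) requires the transposition index to lie in $\{1,\ldots,n-1\}$; this is exactly why the hypothesis $i < n-1$ (rather than $i \leq n-1$) is needed in the first case, and why the extra $\tau^n$ factor in the second case, processed via Lemma \ref{lemma:stautheta}(3), is essential to shift the final transposition into range.
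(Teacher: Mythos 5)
Your proposal is correct and takes essentially the same route as the paper: precompose both sides of \eqref{eqn:tautheta} with the extra degeneracy $s_{n+1}^n$, use Lemma \ref{lemma:stautheta}(2)--(3) to push $s_{n+1}^n$ through, reduce to identities among the $\theta_j^{n+1}$, and cancel the injective $s_{n+1}^n$. The only cosmetic difference is that the paper verifies the two resulting identities by manipulating Moore-relation words directly, whereas you check them as permutation identities and invoke the fact that the Moore relations present $S_{n+1} \cong \Aut_{\Phi_*}([n+1])$ --- an equivalent and equally valid step.
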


\begin{proof}
    First we consider the case $1 \leq i < n-1$. Then, using Lemma \ref{lemma:stautheta} and the Moore relations \eqref{eqn:moore2}--\eqref{eqn:moore3}, we have
\begin{align*}
    s_{n+1}^n \tau^n \theta_{i+1}^n &= \theta_n^{n+1} \cdots \theta_1^{n+1} s_{n+1}^n \theta_{i+1}^n \\
    &= \theta_n^{n+1} \cdots \theta_1^{n+1} \theta_{i+1}^{n+1} s_{n+1}^n \\
    &= \theta_i^{n+1}\theta_n^{n+1} \cdots \theta_1^{n+1} s_{n+1}^n \\
    &= \theta_i^{n+1} s_{n+1}^n \tau^n \\
    &= s_{n+1}^n \theta_i^n \tau^n,
\end{align*}
so it follows that $\tau^n \theta_{i+1}^n = \theta_i^n \tau^n$.

For the $i=n-1$ case, we again use Lemma \ref{lemma:stautheta} and the Moore relations to get
\begin{align*}
    s_{n+1}^n (\tau^n)^2 \theta_1^n \cdots \theta_{n-1}^n 
    &= (\theta_n^{n+1} \cdots \theta_1^{n+1})^2 s_{n+1}^n \theta_1^n \cdots \theta_{n-1}^n \\
    &= \theta_n^{n+1} \cdots \theta_1^{n+1} \theta_n^{n+1} s_{n+1}^n \\
    &= \theta_{n-1}^{n+1} \theta_n^{n+1} \cdots \theta_1^{n+1} s_{n+1}^n \\
    &= \theta_{n-1}^{n+1} s_{n+1}^n \tau^n\\
    &= s_{n+1}^n \theta_{n-1}^n \tau^n. \qedhere
\end{align*}
\end{proof}
Together, Propositions \ref{prop:cyclic} and \ref{prop:cyclicswap} complete the proof of Theorem \ref{thm:main}.

\section{The simplex construction}\label{sec:simplex}
In this section, we describe a useful construction for producing $2$-Segal cosymmetric sets.

\subsection{The basepoint-adjoining functor}

Recall that $\Phi$ is a skeleton of the category of nonempty finite sets, and $\Phi_*$ is a skeleton of the category of finite pointed sets. In Section \ref{sec:phi}, we used the natural inclusion $\Phi_* \hookrightarrow \Phi$, corresponding to the basepoint-forgetting functor from finite pointed sets to nonempty finite sets. 

As is well-known, the basepoint-forgetting functor has a left adjoint, the basepoint-adjoining functor. On objects, the corresponding functor $\Phi \to \Phi_*$ takes $[n]$ to $[n+1]$, and on morphisms, it takes $f: [m] \to [n]$ to $\hat{f}: [m+1] \to [n+1]$, given by $\hat{f}(0) = 0$ and $\hat{f}(i) = f(i-1)+1$ for $1 \leq i \leq m+1$.

If we apply the basepoint-adjoining functor to the generators of $\Phi$, as described in Section \ref{sec:structured}, we can write the result in terms of the generators of $\Phi_*$ as follows:
\begin{align}
    d_i^n &\longmapsto \begin{cases}
        d_{i+1}^{n+1} & i<n,\\
        d_1^{n+1} \theta_2^{n+1} \cdots \theta_n^{n+1} & i=n,
    \end{cases} \label{eqn:adjoinface}\\
    s_i^n &\longmapsto s_{i+1}^{n+1}, \label{eqn:adjoindegen}\\
    \theta_i^n &\longmapsto \theta_{i+1}^{n+1}, \label{eqn:adjointheta}\\
    \tau^n &\longmapsto \theta_n^{n+1} \cdots \theta_1^{n+1}. \label{eqn:adjointau}
\end{align}

Given any $\Gamma$-set $X_\bullet$, we can compose with the basepoint-adjoining functor to obtain a cosymmetric set $\hat{X}_\bullet$, where $\hat{X}_n = X_{n+1}$. From \eqref{eqn:adjoinface}--\eqref{eqn:adjoindegen} we obtain formulae for the simplicial structure on $\hat{X}_\bullet$ in terms of the structure maps on $X_\bullet$:
\begin{align*}
\hat{d}_i^n &= \begin{cases}
    d_{i+1}^{n+1} & i < n,\\
    d_1^{n+1} \theta_2^{n+1} \cdots \theta_n^{n+1} & i=n,
\end{cases}\\
\hat{s}_i^n &= s_{i+1}^{n+1}.
\end{align*}
From \eqref{eqn:adjointheta}--\eqref{eqn:adjointau} we can similarly write explicit formulae for the additional cosymmetric structure on $X_\bullet$, but there is a more direct description. The $\Gamma$-structure on $X_\bullet$ provides $S_n$-actions on $X_n$ for each $n$, so $\hat{X}_n = X_{n+1}$ has an $S_{n+1}$-action, and this is exactly the $S_{n+1}$-action that gives the cosymmetric structure on $\hat{X}_\bullet$.

\begin{thm}\label{thm:basepointadjoin}
    Let $X_\bullet$ be a $\Gamma$-set. If $X_\bullet$ is $2$-Segal, then the cosymmetric set $\hat{X}_\bullet$ is $2$-Segal.
\end{thm}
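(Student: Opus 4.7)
My plan is to verify both $2$-Segal pullback squares for $\hat{X}_\bullet$ directly, translating the hat-face maps into expressions in $X_\bullet$ via $\hat{d}_i^n = d_{i+1}^{n+1}$ for $i<n$ and $\hat{d}_n^n = d_1^{n+1}\beta_{n+1}$, where $\beta_k := \theta_2^k \theta_3^k \cdots \theta_{k-1}^k$. Note that $\beta_k$ lies in the $S_k$-action coming from the $\Gamma$-structure and is therefore a bijection on $X_k$. For $0<i<n$, the first $2$-Segal diagram for $\hat{X}_\bullet$ becomes the square
\[
\begin{tikzcd}
X_{n+2} \arrow[d, "d_{i+2}"'] \arrow[r, "d_1"] & X_{n+1} \arrow[d, "d_{i+1}"] \\
X_{n+1} \arrow[r, "d_1"] & X_n
\end{tikzcd}
\]
of standard face maps, which is a pullback by Lemma \ref{lemma:nnpullback} applied with the lemma's indices $1$ and $i+1$ (the required range $1<i+1<n+1$ holds since $0<i<n$). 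This case uses only the underlying $2$-Segal structure.

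For the second $2$-Segal diagram I would factor the translated square
\[
\begin{tikzcd}
X_{n+2} \arrow[d, "d_{i+1}"'] \arrow[r, "d_1\beta_{n+2}"] & X_{n+1} \arrow[d, "d_{i+1}"] \\
X_{n+1} \arrow[r, "d_1\beta_{n+1}"] & X_n
\end{tikzcd}
\]
horizontally by inserting an intermediate column:
\[
\begin{tikzcd}
X_{n+2} \arrow[d, "d_{i+1}"'] \arrow[r, "\beta_{n+2}"] & X_{n+2} \arrow[d, "d_{i+2}"] \arrow[r, "d_1"] & X_{n+1} \arrow[d, "d_{i+1}"] \\
X_{n+1} \arrow[r, "\beta_{n+1}"] & X_{n+1} \arrow[r, "d_1"] & X_n
\end{tikzcd}
\]
The right square is the one already shown to be a pullback, commuting by the simplicial identity $d_{i+1}d_1 = d_1 d_{i+2}$. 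The left square has invertible horizontal maps, so it is automatically a pullback as soon as it commutes, and the pasting law then yields the outer square as a pullback.

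The main obstacle, and essentially the only place where the $\Gamma$-structure is used, is verifying the commutation $d_{i+2}^{n+2}\beta_{n+2} = \beta_{n+1}\,d_{i+1}^{n+2}$ for $0<i<n$. Since $X$ is a functor out of $\Phi_*$, it suffices to check this identity in $\Phi_*$ itself, whose morphisms are pointed maps of finite sets. A direct trace shows that both composites $[n+2]\to[n+1]$ act by $0\mapsto 0$, $1\mapsto 1$, $j\mapsto j+1$ for $2\le j\le i+1$, $j\mapsto j$ for $i+2\le j\le n+1$, and $n+2\mapsto 2$, so they coincide. One could equivalently derive this identity by iterated application of the mixed relation \eqref{eqn:thetad} inside the product $\theta_2\cdots\theta_{n+1}$, but the set-theoretic check in $\Phi_*$ is substantially lighter.
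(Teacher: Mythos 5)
Your proof is correct and takes essentially the same route as the paper's: the first square is handled directly by Lemma \ref{lemma:nnpullback}, and the twisted square is factored through exactly the same intermediate column so that the right-hand piece is a face-map pullback and the left-hand piece is a commuting square with invertible horizontal maps. The only difference is that you verify the commutativity $d_{i+2}^{n+2}\beta_{n+2} = \beta_{n+1}d_{i+1}^{n+2}$ explicitly (and correctly) at the level of pointed maps in $\Phi_*$, a check the paper leaves implicit.
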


\begin{proof}
The diagrams \eqref{diag:2Segal} for $\hat{X}_\bullet$, expressed in terms of the structure maps of $X_\bullet$, are as follows:
\[
\begin{tikzcd}[column sep=huge]
X_{n+2} \arrow[d, "d_{i+2}"'] \arrow[r, "d_1"] & X_{n+1} \arrow[d, "d_{i+1}"]\\
X_{n+1}   \arrow[r, "d_1"]                                  & X_n             
\end{tikzcd}
\hspace{3em}
\begin{tikzcd}[column sep=huge]
X_{n+2} \arrow[d, "d_{i+1}"'] \arrow[r, "d_1 \theta_2 \cdots \theta_{n+1}"] & X_{n+1} \arrow[d, "d_{i+1}"] \\
X_{n+1} \arrow[r, "d_1 \theta_2 \cdots \theta_n"]                              & X_n          
\end{tikzcd}
\]
The diagram on the left is a pullback by Lemma \ref{lemma:nnpullback}. The diagram on the right can be expanded as
\[\begin{tikzcd}[column sep=huge]
X_{n+2} \arrow[d, "d_{i+1}"'] \arrow[r, "\theta_2 \cdots \theta_{n+1}"] & X_{n+2} \arrow[d, "d_{i+2}"'] \arrow[r, "d_1"] & X_{n+1} \arrow[d, "d_{i+1}"] \\
X_{n+1} \arrow[r, "\theta_2 \cdots \theta_n"] & X_{n+1} \arrow[r, "d_1"]                              & X_n          
\end{tikzcd}
\]
which shows that it is a pullback because of Lemma \ref{lemma:nnpullback} and the fact that the $\theta_i$ maps are invertible.
\end{proof}

\begin{remark}
Readers familiar with the theory of $2$-Segal objects may recognize a similarity between the construction of $\hat{X}_n$ and the d\'ecalage functor, but we stress that the difference in the definition of $\hat{d}_n^n$ has nontrivial implications. In particular, whereas the d\'ecalage of a $2$-Segal object is $1$-Segal, we will see in examples that this is not the case for $\hat{X}_n$.
\end{remark}

\subsection{\texorpdfstring{Pointed $\Gamma$-sets and cosymmetric sets}{Pointed Gamma-sets and cosymmetric sets}}

A structured simplicial set $X_\bullet$ is said to be \emph{pointed} if $X_0$ has a single point. In both $\Phi_*$ and $\Phi$, the object $[0]$ is terminal, so every $\Gamma$-set (cosymmetric set) naturally decomposes as a disjoint union of pointed $\Gamma$-sets (cosymmetric sets), indexed by the elements of $X_0$. In this case, for each $u \in X_0$, we denote by $X_\bullet^u$ the restriction of $X_\bullet$ to the component lying over $u$.

The following result is a straightforward exercise.
\begin{prop}\label{prop:restriction}
A $\Gamma$-set (cosymmetric set) $X_\bullet$ is $2$-Segal if and only if $X_\bullet^u$ is $2$-Segal for all $u \in X_0$.
\end{prop}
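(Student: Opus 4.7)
The plan is to leverage the fact that $[0]$ is terminal in both $\Phi_*$ and $\Phi$, which forces a canonical decomposition of $X_\bullet$ into disjoint pieces indexed by $X_0$, and then observe that the $2$-Segal pullback conditions respect this decomposition.

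First I would establish the decomposition. In $\Phi_*$, the unique morphism $[n] \to [0]$ sends everything to the basepoint $0$; in $\Phi$, the unique morphism $[n] \to [0]$ sends everything to the unique element of $[0]$. Thus for any $\Gamma$-set or cosymmetric set $X_\bullet$, there is a canonical map $p_n: X_n \to X_0$, and by terminality the triangle formed with any other structure map $f^*: X_n \to X_m$ necessarily commutes: $p_m \circ f^* = p_n$. In particular the face, degeneracy, transposition, and cyclic permutation maps all respect the fiber decomposition $X_n = \bigsqcup_{u \in X_0} X_n^u$, where $X_n^u := p_n^{-1}(u)$. It follows that $X_\bullet = \bigsqcup_{u \in X_0} X_\bullet^u$ as $\Gamma$-sets (respectively cosymmetric sets), and since $p_0$ is the identity, each $X_\bullet^u$ has a single $0$-simplex and is therefore pointed.

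Next I would check the $2$-Segal condition componentwise. The simplicial set underlying $X_\bullet$ decomposes as a disjoint union of the simplicial sets underlying $X_\bullet^u$, and each face map in \eqref{diag:2Segal} restricts componentwise. Hence each of the two diagrams \eqref{diag:2Segal} for $X_\bullet$ is the disjoint union over $u \in X_0$ of the corresponding diagrams for $X_\bullet^u$. Since disjoint union is the coproduct in $\set$ and coproducts commute with pullbacks in $\set$, a square of the form
\[
\begin{tikzcd}
\bigsqcup_u D_u \arrow[r] \arrow[d] & \bigsqcup_u A_u \arrow[d]\\
\bigsqcup_u B_u \arrow[r] & \bigsqcup_u C_u
\end{tikzcd}
\]
(with each map preserving the indexing by $u$) is a pullback if and only if each of the component squares $D_u \to A_u \to C_u \leftarrow B_u \leftarrow D_u$ is a pullback. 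Applying this observation to the two $2$-Segal squares \eqref{diag:2Segal} yields the equivalence: $X_\bullet$ is $2$-Segal if and only if $X_\bullet^u$ is $2$-Segal for every $u \in X_0$.

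There is essentially no obstacle here; the only thing to be careful about is verifying that the decomposition of $X_\bullet$ is a decomposition as structured simplicial sets (not merely as simplicial sets), which is exactly what the terminality of $[0]$ in $\Phi_*$ and $\Phi$ guarantees. The argument works uniformly for $\Gamma$-sets and for cosymmetric sets because it depends only on this terminality property.
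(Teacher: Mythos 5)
Your proof is correct and matches the paper's intended approach: the paper itself gives no proof, labeling the result ``a straightforward exercise'' immediately after establishing the very decomposition you describe, namely that terminality of $[0]$ in $\Phi_*$ and $\Phi$ forces every structure map to commute with the canonical projections $X_n \to X_0$, splitting $X_\bullet$ into pointed components. Your completion of the exercise---each square in \eqref{diag:2Segal} is a disjoint union over $u \in X_0$ of the component squares, and in $\set$ such a fiberwise disjoint union of squares is a pullback if and only if each component square is---is exactly the argument the authors leave to the reader.
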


\subsection{Simplex sets of commutative partial monoids} \label{sec:simplexdef}

Let $M$ be a commutative partial monoid. By Proposition \ref{prop:commpartialmonoidgamma}, the nerve of $M$ is a $2$-Segal $\Gamma$-set, where $S_n$ acts on $N_n M$ by permuting the components. Applying the basepoint-adjoining functor, we obtain the cosymmetric set $\hat{N}_\bullet M$, where
\[ \hat{N}_n M = N_{n+1} M = \left\{(x_0,\dots, x_n) \in M^{n+1} \suchthat x_0 \cdots x_n \text{ is defined}\right\},\]
the face and degeneracy maps are given by
\begin{align*}
    d_i(x_0, \dots, x_n) &= \begin{cases} (x_0, \dots, x_{i-1}, x_i \cdot x_{i+1}, x_{i+2}, \dots, x_n), & 0 \leq i \leq n-1,\\
    (x_0 \cdot x_n, x_1, \dots, x_{n-1}), & i=n,
    \end{cases}
    \\
    s_i(x_0, \dots, x_n) &= (x_0, \dots, x_i, e, x_{i+1}, \dots, x_n), \hspace{5em} 0 \leq i \leq n,
\end{align*}
and where $S_{n+1}$ acts by permuting the components. Theorem \ref{thm:basepointadjoin} tells us that $\hat{N}_\bullet M$ is $2$-Segal.

Alternatively, the cosymmetric structure on $\hat{N}_\bullet M$ can be directly described as follows. Given $f \in \Hom_{\Phi}([m],[n])$, let
\[f_*: \hat{N}_m M \to \hat{N}_n M\]
be given by $f_*(x_0,\dots,x_m) = (y_0,\dots,y_n)$, where
\[ y_i = \prod_{j \in f^{-1}(i)} x_j.\]
It is straightforward to check that this gives a functor $\Phi \to \set$, and that it agrees with the face and degeneracy maps given above.

Now, suppose we choose a distinguished element $L \in M$. Since $\hat{N}_0 M  = M$, we can consider the component $\hat{N}_\bullet^L M$, given by
\begin{equation} \label{eqn:test}
\hat{N}_n^L M = \left\{(x_0,\dots, x_n) \in M^{n+1} \suchthat x_0 \cdots x_n=L\right\},
\end{equation}
which inherits the cosymmetric structure and is $2$-Segal by Proposition \ref{prop:restriction}. We refer to $\hat{N}_\bullet^L M$ as the \emph{$L$-simplex set} of $M$.

We emphasize that, in general, the $L$-simplex set of $M$ is different from the nerve of $M$, although in some cases (for example, when $M$ is an abelian group) the two are isomorphic. The relationship between the $L$-simplex set and the nerve is further discussed in Section \ref{subsec:simplexvsnerve}.

The prototypical example is the following. Further examples are given in Sections \ref{sec:effect} and \ref{sec:noteffect}.

\begin{example}\label{ex:interval}
   Let $M$ be the interval $[0,1]$ with the partially-defined addition operation, and let $L = 1$. Then
   \[ \hat{N}^1_n M = \left\{(x_0, \dots, x_n) \in [0,1]^{n+1} \suchthat x_0 + \dots + x_n = 1 \right\}\]
   is the topological $n$-simplex, and the cosymmetric structure provides the usual $S_{n+1}$ action.
\end{example}

\subsection{Simplex sets vs.\ nerves}\label{subsec:simplexvsnerve}

Let $M$ be a commutative partial monoid equipped with a distinguished element $L \in M$. From this data two different associated simplicial sets --- the nerve $N_\bullet M$ and the $L$-simplex set $\hat{N}^L_\bullet M$ --- can be constructed. The nerve is a $\Gamma$-set, and the $L$-simplex set also has a $\Gamma$-set structure, induced from the cosymmetric structure.

The following can be seen directly from the description of the $L$-simplex set given in Section \ref{sec:simplexdef}.

\begin{prop}\label{prop:simplextonerve}
    The maps
    \begin{align*}
        \hat{N}_n^L M &\longrightarrow N_n M, \\
        (x_0, \dots, x_n) &\mapsto (x_1, \dots, x_n),
    \end{align*} 
    form a morphism of $\Gamma$-sets $\hat{N}_\bullet^L M \to N_\bullet M$.
\end{prop}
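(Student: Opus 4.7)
The plan is to verify two things: that each map $p_n \colon (x_0, \dots, x_n) \mapsto (x_1, \dots, x_n)$ is well-defined as a function $\hat{N}^L_n M \to N_n M$, and that the collection $\{p_n\}$ is natural with respect to every morphism in $\Phi_*$. Well-definedness is immediate from associativity of the partial monoid: if $x_0 \cdot x_1 \cdots x_n = L$ is defined, then the subproduct $x_1 \cdots x_n$ is defined as well, so $(x_1, \dots, x_n)$ does land in $N_n M$.

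For naturality, I would invoke the uniform description of the cosymmetric action on $\hat{N}_\bullet M$ given in Section \ref{sec:simplexdef}: for any $f \in \Hom_{\Phi}([m],[n])$, the induced map sends $(x_0, \dots, x_m)$ to $(y_0, \dots, y_n)$ with $y_j = \prod_{k \in f^{-1}(j)} x_k$. The key observation is that when $f$ lies in $\Phi_* \subseteq \Phi$, the condition $f(0) = 0$ forces $f^{-1}(j) \subseteq \{1, \dots, m\}$ for every $j \geq 1$, so each entry $y_j$ with $j \geq 1$ involves only $x_1, \dots, x_m$. This formula agrees exactly with the $\Gamma$-action on the nerve described in the proof of Proposition \ref{prop:commpartialmonoidgamma}, namely $f_\#(x_1, \dots, x_m) = (y_1, \dots, y_n)$.

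It follows that $p_n \circ f_* = f_\# \circ p_m$ for every $f \in \Hom_{\Phi_*}([m],[n])$, which is the required naturality. I do not anticipate a real obstacle here: the content of the statement is essentially the bookkeeping remark that because $\Phi_*$-morphisms fix the basepoint, the discarded entry $x_0$ of $\hat{N}^L$ can only contribute to the entry $y_0$, which is itself forgotten by the projection. Hence the projection commutes with the induced structure maps.
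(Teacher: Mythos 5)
Your proposal is correct and is exactly the argument the paper has in mind: the paper dispatches this proposition as ``seen directly'' from the uniform formula $f_*(x_0,\dots,x_m)=(y_0,\dots,y_n)$ with $y_i=\prod_{j\in f^{-1}(i)}x_j$ in Section~\ref{sec:simplexdef}, and your key observation---that $f(0)=0$ for $f\in\Phi_*$ confines the discarded entry $x_0$ to $y_0$, so the projection intertwines $f_*$ with the nerve's $\Gamma$-action from Proposition~\ref{prop:commpartialmonoidgamma}---is precisely that check, spelled out. Your well-definedness remark via Kleene associativity is also sound (indeed $p_n$ is just the face map $d_0$ of $N_\bullet M$ restricted to the component over $L$).
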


In general, the morphism in Proposition \ref{prop:simplextonerve} is not an isomorphism, but the following provides a necessary and sufficient condition.

\begin{definition}
A commutative partial monoid $M$ is said to have the \emph{orthocomplement property} with respect to $L \in M$ if, for every $x \in M$, there is a unique $x^\perp \in M$ such that $x \cdot x^\perp = L$.
\end{definition}

\begin{prop}\label{prop:nervetosimplex}
    The morphism in Proposition \ref{prop:simplextonerve} is an isomorphism if and only if $M$ has the orthocomplement property with respect to $L$.
\end{prop}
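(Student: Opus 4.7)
The plan is to check the two directions separately by testing the morphism levelwise; since the map in Proposition \ref{prop:simplextonerve} is already a morphism of $\Gamma$-sets, it suffices to show that each component map $\hat{N}_n^L M \to N_n M$ is a bijection. The ``only if'' direction is forced by looking at the simplest interesting level: when $n=1$, we have $\hat{N}_1^L M = \{(x_0,x_1) \in M^2 \mid x_0 \cdot x_1 = L\}$ and $N_1 M = M$, and the morphism sends $(x_0, x_1) \mapsto x_1$. If this map is a bijection, then for every $x_1 \in M$ there is a unique $x_0 \in M$ with $x_0 \cdot x_1 = L$, which is precisely the orthocomplement property with respect to $L$.

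For the ``if'' direction, assume $M$ has the orthocomplement property with respect to $L$, and fix $n \geq 0$. Given $(x_1,\dots,x_n) \in N_n M$, set $y = x_1 \cdots x_n$, which is defined by hypothesis, and let $x_0 = y^\perp$, the unique element with $x_0 \cdot y = L$. Then by associativity of the partial monoid, $x_0 \cdot x_1 \cdots x_n$ is defined and equals $L$, so $(x_0, x_1, \dots, x_n) \in \hat{N}_n^L M$ maps to $(x_1,\dots,x_n)$; this shows surjectivity. For injectivity, if $(x_0, x_1, \dots, x_n)$ and $(x_0', x_1, \dots, x_n)$ both lie in $\hat{N}_n^L M$, then associativity gives $x_0 \cdot y = x_0' \cdot y = L$, and uniqueness of the orthocomplement forces $x_0 = x_0'$.

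Combining the two directions yields the stated equivalence. The argument is essentially a direct computation; the only place that requires a little care is invoking associativity in the partial monoid setting to pass between the groupings $x_0 \cdot (x_1 \cdots x_n)$ and $x_0 \cdot x_1 \cdots x_n$, but this is exactly what the associativity axiom for partial monoids guarantees (both sides defined together and equal). No genuine obstacle arises, so the proof can be written in just a few lines.
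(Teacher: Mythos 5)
Your proof is correct and follows essentially the same route as the paper: the forward direction is forced by the $n=1$ level exactly as in the paper, and your levelwise surjectivity/injectivity check in the converse direction is just an unpacked version of the paper's explicit inverse $(x_1,\dots,x_n)\mapsto\bigl((x_1\cdots x_n)^\perp, x_1,\dots,x_n\bigr)$, with both arguments resting on the observation that $x_0=(x_1\cdots x_n)^\perp$ is forced. Your remark about generalized associativity in the partial monoid is a fair point of care, but it is the same standard fact the paper implicitly uses.
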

\begin{proof}
($\impliedby$) If $M$ has the orthocomplement property with respect to $L$, then, for $(x_0,\dots,x_n) \in \hat{N}_n^L M$, we necessarily have $x_0 = (x_1 \cdots x_n)^\perp$, so the maps
\begin{align*}
    N_n M &\longrightarrow \hat{N}_n^L M, \\
    (x_1, \dots, x_n) &\mapsto \left( (x_1 \cdots x_n)^\perp, x_1, \dots, x_n \right),
\end{align*}
provide an inverse to the map in Proposition \ref{prop:simplextonerve}.

($\implies$) For $n=1$, the map in Proposition \ref{prop:simplextonerve} is given by $(x_0, x_1) \mapsto x_1$. If this map is an isomorphism, then, for every $x_1 \in M$, there is a unique $x_0 \in M$ such that $(x_0, x_1) \in \hat{N}_1^L M$ or, equivalently, such that $x_0 \cdot x_1 = L$.
\end{proof}

\begin{cor}\label{cor:cyclicnerve}
    Let $M$ be a commutative partial monoid that has orthocomplements with respect to $L \in M$. Then the nerve of $M$ has a cosymmetric structure, and thus an induced cyclic structure.
\end{cor}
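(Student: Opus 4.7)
The plan is to transport the cosymmetric structure from the $L$-simplex set $\hat{N}_\bullet^L M$ of Section \ref{sec:simplexdef} onto $N_\bullet M$ via the $\Gamma$-set isomorphism supplied by Proposition \ref{prop:nervetosimplex}. The orthocomplement hypothesis on $(M,L)$ is precisely the hypothesis of that proposition, which gives mutually inverse maps of $\Gamma$-sets
\[(x_0,\dots,x_n) \longmapsto (x_1,\dots,x_n), \qquad (x_1,\dots,x_n) \longmapsto \bigl((x_1\cdots x_n)^\perp,\, x_1, \dots, x_n\bigr).\]

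With this isomorphism $\varphi \colon \hat{N}_\bullet^L M \to N_\bullet M$ in hand, I would define the cosymmetric structure on $N_\bullet M$ by conjugation: for each $f \in \Hom_\Phi([m],[n])$, set the corresponding map $N_m M \to N_n M$ to be $\varphi_n \circ f_* \circ \varphi_m^{-1}$, where $f_*$ denotes the already-established cosymmetric structure on $\hat{N}_\bullet^L M$. Conjugation of a functor by a natural isomorphism is again a functor, so this defines a functor $\Phi \to \set$; and because $\varphi$ is by construction an isomorphism of $\Gamma$-sets, restriction of the new structure along $\Phi_* \hookrightarrow \Phi$ recovers the original $\Gamma$-structure on $N_\bullet M$, so we genuinely obtain an extension. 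Concretely, the transpositions $\theta_i^n$ still act as component swaps on $(x_1,\dots,x_n)$, while the product formula $f_*(x_0,\dots,x_m)_i = \prod_{j \in f^{-1}(i)} x_j$ of Section \ref{sec:simplexdef} computes the cyclic generator as
\[\tau^n(x_1,\dots,x_n) = (x_2,\dots,x_n,(x_1\cdots x_n)^\perp),\]
matching the cyclic structure Roumen identified in the effect-algebra setting.

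Finally, the induced cyclic structure is obtained by precomposing the resulting cosymmetric functor $N_\bullet M \colon \Phi \to \set$ with the opposite of the functor $\Lambda \to \Phi^\op$ appearing in diagram \eqref{eqn:categories}; equivalently, the cyclic relations \eqref{eqn:paracyclicface}--\eqref{eqn:paracyclicdegen} and \eqref{eqn:cyclicperm} hold in any cosymmetric set. The entire argument is formal transport of structure across an isomorphism, so there is no substantive obstacle; the only point that warrants a sanity check is that the explicit $\tau^n$ above really does agree with the image under $\varphi$ of the canonical cyclic generator on $\hat{N}_\bullet^L M$, which is immediate from the product formula cited above.
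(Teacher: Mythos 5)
Your proof is correct and takes essentially the same route as the paper: the corollary is obtained there exactly by transporting the cosymmetric structure on the $L$-simplex set $\hat{N}_\bullet^L M$ across the $\Gamma$-set isomorphism of Proposition \ref{prop:nervetosimplex}, with the cyclic structure induced via $\Lambda \to \Phi^\op$. Your explicit formula $\tau^n(x_1,\dots,x_n) = (x_2,\dots,x_n,(x_1\cdots x_n)^\perp)$ matches the paper's \eqref{eqn:tau}, and your check that restriction along $\Phi_* \hookrightarrow \Phi$ recovers the original $\Gamma$-structure is the right point to verify.
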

We stress that, in Corollary \ref{cor:cyclicnerve}, the cosymmetric and cyclic structures on the nerve depend on the choice of $L$. 
Explicitly, the cyclic structure on $N_n M$ is given by
\begin{equation}\label{eqn:tau}
\tau(x_1,\dots, x_n) = (x_2, \dots, x_n, (x_1 \cdots x_n)^\perp).
\end{equation} 

\subsection{Example: effect algebras}
\label{sec:effect}

The notion of \emph{effect algebra} was introduced by Foulis and Bennett \cite{foulis-bennett} in the study of quantum logic.

\begin{definition}
    An \emph{effect algebra} is a commutative partial monoid $M$, equipped with a distinguished element $L \in M$, satisfying the following conditions.
    \begin{enumerate}
        \item (Orthocomplement) For every $x \in M$, there is a unique $x^\perp \in M$ such that $x \cdot x^\perp = L$.
        \item (Zero-one Law) If $x \cdot L$ is defined, then $x = e$.
    \end{enumerate}
\end{definition}
By Corollary \ref{cor:cyclicnerve}, the nerve of an effect algebra has a cosymmetric structure. This slightly enhances a result of Roumen \cite{roumen:effectalgebras}, who showed that the nerve of an effect algebra has a cyclic structure. We note that Roumen's construction was based on \emph{test spaces}, which were defined identically to $\hat{N}_n^L(M)$ in \eqref{eqn:test}.

Some of the examples we have seen previously are effect algebras, so their nerves have cosymmetric structures where the cyclic operation is given by \eqref{eqn:tau}.

\begin{example}
    The unit interval $M= [0,1]$ with the partially-defined addition operation and $L=1$ (see Example \ref{ex:interval}) is an effect algebra. Its nerve is given by
    \[ N_n M = \left\{(x_1,\dots,x_n) \in [0,1]^n \suchthat x_1 + \dots + x_n \leq 1\right\}.\]
    In this case, the isomorphism $\hat{N}_n^1 M \cong N_n M$ reproduces the classical isomorphism between the two standard models of the $n$-simplex.
\end{example}

\begin{example}
    The partial monoid $M = \{0, \dots, L\}$ with the partially-defined addition operation (see Example \ref{ex:L} is an effect algebra.
\end{example}

\begin{example}
    The power set $M=P(S)$ of a set $S$, with the partially-defined disjoint union operation (see Example \ref{ex:disjointunion}), is an effect algebra with $L=S$.
\end{example}

\begin{example}
 The set of subspaces of a Hilbert space $\mathcal{H}$, with the partially-defined orthogonal sum operation (see Example \ref{ex:orthogonalsum}), is an effect algebra with $L = \mathcal{H}$.
\end{example}

\subsection{Examples that are not effect algebras}
\label{sec:noteffect}

Here we will give some examples of $2$-Segal cosymmetric sets coming from applying the $L$-simplex construction to partial monoids that are not effect algebras. In particular, Example \ref{ex:union} produces pointed $2$-Segal cosymmetric sets that are not isomorphic to the nerve of any partial monoid.

\begin{example}
Let $G$ be an abelian group, and let $L \in G$ be any fixed element. Since the zero-one law is not satisfied (except when $G$ is trivial), $G$ is not an effect algebra. However, the orthocomplement property is satisfied, so Corollary \ref{cor:cyclicnerve} still holds, and the nerve of $G$ has a cosymmetric structure, where in particular the cyclic structure is given by
\[ \tau(g_1,\dots,g_n) = (g_2,\dots,g_n,L(g_1\cdots g_n)^{-1}).\]
\end{example}

\begin{example}\label{ex:union}
Let $S$ be a set, and consider the monoid $M = P(S)$ with the union operation. We stress that this is the ordinary union operation, different from the partially-defined disjoint union operation in Example \ref{ex:disjointunion}. 

Setting $L=S$, the $S$-simplex set $\hat{N}_\bullet^S M$ is given by
\[ \hat{N}_n^S M = \left\{(A_0,\dots,A_n)\in M^{n+1} \suchthat A_0 \cup \cdots \cup A_n = S.\right\}\]
The $L$-simplex set is a pointed $2$-Segal cosymmetric set, but since the orthocomplement property is not satisfied (unless $S = \emptyset$), the map in Proposition \ref{prop:simplextonerve} is not an isomorphism. In fact, $\hat{N}_\bullet^S$ is not isomorphic to the nerve of any partial monoid. To see this, we observe that the nerve of a partial monoid satisfies the property that the map $(d_2,d_0): X_2 \rightarrow X_1 \times X_1$ is injective, but in this case $d_2(A,S,S) = (A \cup S,S) = (S,S)$ and $d_0(A,S,S) = (A \cup S,S) = (S,S)$ for all $A \in P(S)$.
\end{example}

\section{Coherent 2D TQFTs}

As described in the Introduction, our motivating desire is to interpret a $2$-Segal cosymmetric set as a coherent $2$-dimensional TQFT taking values in $\Span$. In this section, we provide more detail on what this means, and we describe how one can obtain genuine TQFTs via the simplex set construction of Section \ref{sec:simplexdef}.

\subsection{Pseudofunctors from the cobordism category}

We first make a somewhat trivial observation about the well-known correspondence between commutative Frobenius algebras and $2$-dimensional oriented TQFTs. Given a commutative Frobenius algebra and a $2$-dimensional cobordism, if we want to produce the linear map that the associated TQFT assigns to the cobordism, then we first have to choose a \emph{pants decomposition}, i.e.\ a decomposition of the cobordism into pants, disks, and cylinders. We would then use data of the Frobenius algebra to assign a linear map to each piece of the decomposition and compose appropriately to obtain the linear map assigned to the cobordism. The axioms of a commutative Frobenius algebra are necessary and sufficient to ensure that the linear maps produced from two different pants decompositions of the same cobordism are equal.

Given a $2$-Segal cosymmetric set $X_\bullet$, we can similarly obtain a span from any cobordism equipped with a pants decomposition\footnote{We should also choose an order in which the compositions and products of spans are to be computed, but for simplicity we will assume that a standard order of operations is fixed.}. To do so, we use the multiplication and unit spans \eqref{eqn:multandunit} and the comultiplication and counit spans
    \begin{align*}
X_1 \xleftarrow{d_0} &X_2 \xrightarrow{(\tau d_2, d_1)} X_1 \times X_1, & X_1 \xleftarrow{} &X_0 \xrightarrow{s_1} \{\mathrm{pt}\}.
\end{align*}
The spans associated to two different pants decompositions of the same cobordism are not equal, but they are isomorphic via an isomorphism constructed out of the associator \eqref{eqn:associator}, unitors \eqref{eqn:unitors}, the maps $s_2^1$ and $\theta_1^2$, and the structure $2$-morphisms of $\Span$ (see \cite{stay}*{Section 5}). The coherence conditions should ensure that the isomorphism constructed in this way is unique\footnote{We add the caveat that making this rigorous would require a coherence theorem for commutative Frobenius pseudomonoids, which to our knowledge is not yet present in the literature.}.

If we globally make a choice of pants decomposition for every diffeomorphism class of cobordisms, then the data we have just described gives a pseudofunctor from the $2$-dimensional oriented cobordism category $\cob_2$ to $\Span$. Different global choices of decompositions lead to equivalent pseudofunctors, but we note that there is a standard choice, given by the ``normal form'' decomposition in \cite{kock}*{Section 1.4.16}).

If we pass to the homotopy category of $\Span$ by identifying spans that are isomorphic, then the above functor induces a TQFT taking values in the category of spans.

\subsection{The Hall algebra construction}

In \cite{BaezGroupoid}, the authors discuss functors which take spans (satisfying certain finiteness conditions) to linear maps of vector spaces. When these functors are applied to the data of a $2$-Segal set, we obtain an associative algebra. See \cite{Dyckerhoff-Kapranov:Higher}*{Chapter 8} or \cite{cooper-young} for details, and see \cite{MM:2segal}*{Section 6} for a concise summary of the various constructions and their relationships. Here, we will focus our attention on finite $2$-Segal sets, in which case the different functors coincide.

Let $X_\bullet$ be a $2$-Segal set that is finite (in the sense that $X_n$ is finite for each $n$), and let $\kk$ be an arbitrary field. The \emph{Hall algebra} $\mathcal{A}(X)$ is defined as follows. As a vector space, $\mathcal{A}(X) = \kk[X_1]$, and for $x,y \in X_1$ the multiplication $m(x,y)$ is given by
\begin{equation}\label{eqn:hallmult}
    m(x,y) = \sum_{\omega \in T^{-1}(x,y)} d_1 \omega,
\end{equation} 
where $T: X_2 \to X_1 \times X_1$ is given by $T(\omega) = (d_2 \omega, d_0 \omega)$. The multiplicative identity is 
\[ 1 = \sum_{u \in X_0} s_0 u.\]

As an immediate consequence of Corollary \ref{cor:commfrob}, if $X_\bullet$ has a cosymmetric structure, then $\mathcal{A}(X)$ is a Frobenius algebra, with counit given by
\[ \varepsilon(x) = \begin{cases}
    1 & x \in s_1(X_0),\\
    0 & x \not\in s_1(X_0),
\end{cases}\]
and the multiplication \eqref{eqn:hallmult} is commutative. Applying this fact to the simplex construction of Section \ref{sec:simplexdef}, we obtain a construction that produces a commutative Frobenius algebra from any finite commutative partial monoid equipped with a distinguished element.

\begin{example}
    Let $M$ be a finite commutative partial monoid that has orthocomplements with respect to $L \in M$. Then, by Corollary \ref{cor:cyclicnerve}, the nerve of $M$ has a cosymmetric structure. The associated Hall algebra is $\mathcal{A} = \kk[M]$, where the multiplication is given by
    \[ m(x,y) = \begin{cases}
        x \cdot y & \text{if $x \cdot y$ is defined},\\
        0 & \text{otherwise},
    \end{cases}\]
the identity element is $1 = e$, and the counit is given by
\[ \varepsilon(x) = \begin{cases}
    1 & x=L,\\
    0 & x\neq L.
\end{cases}\]
In special cases, we can explicitly describe the Frobenius algebras that arise:
\begin{itemize}
    \item When $M=G$ is an abelian group with distinguished element $L \in G$, this construction produces the group algebra $\kk[G]$ with counit given by $\varepsilon(L) = 1$ and $\varepsilon(g) = 0$ for $g \neq L$. In particular, when $G = \Z/m\Z$, the group algebra is isomorphic to $\kk[x]/\langle x^m - 1 \rangle$, where the counit extracts the $x^L$ coefficient.
    \item When $M=\{0,\dots,L\}$ with the partially-defined addition operation (see Example \ref{ex:L}), the associated Frobenius algebra $\mathcal{A}$ is isomorphic to $\kk[x]/\langle x^{L+1} \rangle$, where the counit extracts the $x^L$ coefficient.
    \item  When $S$ is a $k$-element set and $M = P(S)$ with the disjoint union operation (see Example \ref{ex:disjointunion}), $\mathcal{A}$ is isomorphic to $\kk[x_1,\dots,x_k]/\langle x_i^2\rangle$, where the counit extracts the $x_1\cdots x_k$ coefficient.
\end{itemize}
\end{example}

\begin{example}
    Let $S = \{a_1,\dots,a_k\}$ be a finite set, and let $M=P(S)$ with the union operation (see Example \ref{ex:union}). Then 
    \[\hat{N}_1^S M = \left\{(A_0,A_1) \in P(S)^2 \suchthat A_0 \cup A_1 = S\right\}\]
    and 
    \[\hat{N}_2^S M = \left\{(C_0,C_1,C_2) \in P(S)^3 \suchthat C_0 \cup C_1 \cup C_2 = S\right\},\]
    with face maps given by 
    \begin{align*}
    d_0(C_0,C_1,C_2) &= (C_0 \cup C_1, C_2),\\
    d_1(C_0,C_1,C_2) &= (C_0, C_1 \cup C_2),\\
    d_2(C_0,C_1,C_2) &= (C_0 \cup C_2, C_1).
    \end{align*}
    We note that any $(A_0,A_1) \in \hat{N}_1^S M$ is equivalent to a partition of $S$ into three disjoint sets $A_0 \smallsetminus (A_0 \cap A_1)$, $A_1 \smallsetminus (A_0 \cap A_1)$, and $A_0 \cap A_1$, and therefore $|\hat{N}_1^S M| = 3^k$.

Let $\mathcal{A} = \kk[\hat{N}_1^S M]$ be the Hall algebra associated to $\hat{N}_\bullet^S M$. To better understand the structure of $\mathcal{A}$, we first consider elements of $\hat{N}_1^S M$ that are of the form $x_A = (S,A)$, where $A \in P(S)$. If $A \cap B = \emptyset$, then $\omega =(S,A,B)$ is the unique element of $\hat{N}_2^S M$ such that $T(\omega) = (x_A, x_B)$. Writing $x_A x_B$ for the product given by \eqref{eqn:hallmult}, we have $x_A x_B = d_1 \omega = x_{A \cup B}$. In particular, if we write $x_i = x_{\{a_i\}}$, then we have
\[ x_A = \prod_{a_i \in A} x_i\]
for all $A \in P(S)$.

Next, we consider elements of $\hat{N}_1^S M$ that are of the form $y_A = (S\smallsetminus A,A)$. If $A \cap B = \emptyset$, then $\omega = (S \smallsetminus (A \cup B), A,B)$ is the unique element of $\hat{N}_2^S M$ such that $T(\omega) = (y_A, y_B)$, so $y_A y_B = d_1 \omega = y_{A \cup B}$. In particular, if we write $y_i = y_{\{a_i\}}$, then we have
\[ y_A = \prod_{a_i \in A} y_i\]
for all $A \in P(S)$.

We also have, if $A \cap B = \emptyset$, then $\omega = (S \smallsetminus B, A,B)$ is the unique element of $\hat{N}_2^S M$ such that $T(\omega) = (x_A,y_B)$, so $x_A y_B = d_1 \omega = (S\smallsetminus B, A \cup B)$. We note that every element of $\hat{N}_1^S M$ can be uniquely expressed in this form, so it follows that $\mathcal{A}$ is generated by $\{x_i,y_i\}$.

For each $i$, there are two elements, $(S, \{a_i\},\{a_i\})$ and $(S \smallsetminus \{a_i\}, \{a_i\},\{a_i\})$, in $T^{-1}(x_i,x_i)$. Thus, we have
\[ x_i^2 = (S, \{a_i\}) + (S \smallsetminus \{a_i\}, \{a_i\}) = x_i + y_i,\]
so $y_i = x_i^2 - x_i$. Additionally, $T^{-1}(x_i,y_i) = \emptyset$, so
$x_i y_i = x_i^3 - x_i^2 = 0$.

The above discussion, together with dimension counting, allows us to conclude that $\mathcal{A}$ is isomorphic to $\kk[x_1,\dots, x_k]/\langle x_i^3 - x_i^2 \rangle$. The counit is defined by the property that $\varepsilon(\emptyset, S)) = 1$ and $\varepsilon$ vanishes on other elements of $\hat{N}_1^S M$, but in terms of the generators $x_i$, it is given by $\varepsilon(x_1^2 \cdots x_k^2) = 1$, with $\varepsilon$ vanishing on terms of degree $< 2k$.
\end{example}

\bibliography{cosymmetric}
\end{document}